\newcommand{\N}{\mathbb{N}}
\newcommand{\Z}{\mathbb{Z}}
\newcommand{\R}{\mathbb{R}}
\newcommand{\C}{\mathbb{C}}
\newcommand{\dx}{{\rm d}x }
\newcommand{\dt}{{\rm d}t }
\newcommand{\csn}{\operatorname{csn}}
\newcommand{\loc}{\operatorname{loc}}
\newcommand{\beq}{\begin{eqnarray}}
\newcommand{\eeq}{\end{eqnarray}}
\newcommand{\beqs}{\begin{eqnarray*}}
\newcommand{\eeqs}{\end{eqnarray*}}
\newtheorem{theorem}{Theorem}[section]
\newtheorem{proposition}[theorem]{Proposition}
\newtheorem{lemma}[theorem]{Lemma}
\newtheorem{corollary}[theorem]{Corollary}
\theoremstyle{definition}
\newtheorem{definition}[theorem]{Definition}
\newtheorem{example}[theorem]{Example}
\newtheorem{problem}[theorem]{Problem}
\theoremstyle{remark}
\newtheorem{remark}[theorem]{Remark}
\numberwithin{equation}{section}
\title[Quasinormable $C_0$-groups and the spaces of type $\mathcal{D}_E$]{Quasinormable $C_0$-groups and translation-invariant Fr\'echet  spaces of type $\mathcal{D}_E$}
\author[A. Debrouwere]{Andreas Debrouwere}
\address{Department of Mathematics: Analysis, Logic and Discrete Mathematics, Ghent University, Krijgslaan 281, 9000 Gent, Belgium}
\email{Andreas.Debrouwere@UGent.be}
\thanks{The author is supported by  FWO-Vlaanderen, via the postdoctoral grant 12T0519N}
\subjclass[2010]{46E10, 47D03, 46A10.}
\keywords{Quasinormability, $C_0$-groups, translation-invariant Fr\'echet spaces of type $\mathcal{D}_E$}
\begin{document}
\begin{abstract}
Let $E$ be a locally convex Hausdorff space satisfying the convex compact property and let $(T_x)_{x \in \R^d}$ be a locally equicontinuous $C_0$-group of linear continuous operators on $E$. In this article, we show that if $E$ is quasinormable, then the space  of smooth vectors in $E$ associated to $(T_x)_{x \in \R^d}$ is also quasinormable. In particular, we obtain that the space of smooth vectors associated to a $C_0$-group on a Banach space is always quasinormable. As an application, we show that the translation-invariant Fr\'echet spaces of smooth functions of type $\mathcal{D}_E$ \cite{D-Pi-V} are quasinormable, thereby settling the question posed in \cite[Remark 7]{D-Pi-V}. Furthermore, we show that  $\mathcal{D}_E$ is not Montel if $E$ is a solid translation-invariant Banach space of distributions \cite{F-G}. This answers the question posed in \cite[Remark 6]{D-Pi-V} for the class of solid translation-invariant Banach spaces of distributions.
\end{abstract}

\maketitle

\section{Introduction}
The class of quasinormable locally convex spaces was introduced by Grothendieck in \cite{Grothendieck} (see also \cite{M-V}) and plays an important role in the lifting and splitting theory for Fr\'echet spaces \cite{B-D, Valdivia, M-V-0,Vogt-87}. Most spaces appearing in analysis are quasinormable, e.g., all Banach and Schwartz spaces are quasinormable while examples of quasinormable spaces which are neither Banach nor Montel are given by the space $C(X)$ of continuous functions on a non-compact completely regular Hausdorff space $X$, the space $C^n(\Omega)$, $n \in \N$, of $n$-times differentiable functions on an open subset $\Omega$ of $\R^d$ and the spaces $\mathcal{D}_{L^p}(\R^d)$ ($1 \leq p < \infty$), $\mathcal{B}(\R^d)$ and $\dot{\mathcal{B}}(\R^d)$ of Schwartz. Furthermore, the quasinormability of various weighted function spaces has been characterized in terms of the defining weights; see \cite{B-M-S-1, Valdivia-81, Vogt-84} for K\"othe echelon spaces, \cite{B-E, B-M} for weighted spaces of continuous functions and \cite{Wolf-1,Wolf-2, Wolf-3} for weighted spaces of holomorphic functions. 

In the first part of this article, we study quasinormability in the context of $C_0$-groups. Namely, let $E$ be a locally convex Hausdorff space satisfying the convex compact property and let $(T_x)_{x \in \R^d}$ be a locally equicontinuous $C_0$-group of continuous linear operators on $E$; see Sections \ref{sect-prelim} and \ref{sect-conv} for the definition of these notions. Our main result asserts that if $E$ is quasinormable, then the space  of smooth vectors in $E$ associated to $(T_x)_{x \in \R^d}$, endowed with its natural locally convex topology, is also quasinormable. Since every Banach space is quasinormable and every $C_0$-group on a Banach space is locally equicontinuous, we obtain particularly that the space of smooth vectors associated to a $C_0$-group on a Banach space is always quasinormable.  

The second part of this article is devoted to the study of the linear topological properties of the translation-invariant Fr\'echet spaces of type $\mathcal{D}_E$. Translation-invariant Banach spaces  of distributions (TIBD) $E$ and the associated test function spaces $\mathcal{D}_E$ were introduced in \cite{D-Pi-V} and are natural generalizations of the spaces  ${L^p}(\R^d)$ ($1\leq p < \infty$) and $C_0(\R^d)$, and $\mathcal{D}_{L^p}(\R^d)$ and $\dot{\mathcal{B}}(\R^d)$, respectively; we refer to Definitions \ref{Def-1} and \ref{Def-2} below for the precise definition of these notions. Firstly, we show that the space $\mathcal{D}_E$ is quasinormable  for any TIBD $E$. Since every quasinormable Fr\'echet space is distinguished, this  settles the question posed in  \cite[Remark 7]{D-Pi-V}. To this end, we show that the space of smooth vectors associated to the translation group on $E$ coincides with  $\mathcal{D}_E$, whence the quasinormability of $\mathcal{D}_E$ follows immediately from the result obtained in the first part of this article. In particular, we obtain  direct proofs of the quasinormability of $\mathcal{D}_{L^p}(\R^d)$ and $\dot{\mathcal{B}}(\R^d)$; this also follows from the sequence space representations $\mathcal{D}_{L^p}(\R^d) \cong s \widehat{\otimes} l_p$ and $\dot{\mathcal{B}}(\R^d) \cong s \widehat{\otimes} c_0$  \cite[Thm.\ 3.2]{Vogt-83}, while a proof of the quasinormability of $\dot{\mathcal{B}}(\R^d)$ based on duality arguments and the theory of $(LB)$-spaces is given in \cite{D-D}. Secondly,  we show that  $\mathcal{D}_E$ is not Montel if $E$ is a solid TIBD \cite{F-G}, thereby generalizing the well-known fact that the spaces $\mathcal{D}_{L^p}(\R^d)$ and $\dot{\mathcal{B}}(\R^d)$ are not Montel; see Definition \ref{Def-1} below  for the definition of a solid TIBD. This answers the question posed in  \cite[Remark 6]{D-Pi-V} for the class of solid TIBD. We believe that $\mathcal{D}_E$ is never Montel but we were not able to show this for general TIBD $E$.

This paper is organized as follows. In the preliminary Section \ref{sect-prelim}, we fix the notation and introduce the class of $C_0$-groups to be considered.
Our main result concerning $C_0$-groups is shown in Section \ref{sect-quasi}. Its proof is based on a quantitative convolution approximation identity for differentiable vectors associated to a  $C_0$-group, which is proven separately in the auxiliary Section \ref{sect-conv}.  Finally, in Section \ref{TIB}, we  present our results about the spaces  $\mathcal{D}_E$.
\section{Preliminaries}\label{sect-prelim}
In this section, we fix the notation, introduce $C_0$-groups (parameterized by $\R^d$) on general locally convex Hausdorff spaces (cf.\ \cite{Komura}) and define the space of smooth vectors associated to a given $C_0$-group.

We set $\N = \{0,1,2 \ldots \}$. Let $E$ be a Hausdorff locally convex space (from now on abbreviated as lcHs). We denote by $\mathcal{U}_0(E)$ the set of all neighbourhoods of $0$ in $E$, by $\mathcal{B}(E)$ the set of all bounded sets in $E$ and by $\csn(E)$ the set of all continuous seminorms on $E$. Furthermore, $\mathcal{L}(E)$ stands for the space of all continuous linear operators from $E$ into itself. We always endow $\mathcal{L}(E)$ with the strong operator topology, that is, the topology generated by the system of seminorms $\{p_e \, | \, p \in \csn(E), e \in E \}$, where
$$
p_{e}(T) := p(Te), \qquad T \in \mathcal{L}(E),  p \in \csn(E), e \in E.
$$
 The space of compactly supported continuous functions on $\R^d$ is denoted by $C_c(\R^d)$. Furthermore, we write $C^n_c(\R^d) = C^n(\R^d) \cap C_c(\R^d)$, $n\in \N$, and $\mathcal{D}(\R^d) = C^\infty(\R^d) \cap C_c(\R^d)$. 

Let $E$ be a lcHs. A family  $(T_x)_{x \in \R^d} \subset \mathcal{L}(E)$ is said to be a $C_0$-group on  $E$ if the following conditions are satisfied
\begin{itemize}
\item[$(i)$] $T_0 = \operatorname{id}$.
\item[$(ii)$] $T_{x + y} = T_{x} \circ T_{y}$ for all $x,y \in \R^d$.
\item[$(iii)$] $\lim_{x \rightarrow 0} T_x e = e$ for all $e \in E$.
\end{itemize}
These conditions imply that the mapping
\begin{equation}
\R^d \rightarrow \mathcal{L}(E): x \rightarrow T_x
\label{def}
\end{equation}
is  continuous. The $C_0$-group $(T_x)_{x \in \R^d}$ is called locally equicontinuous if, for each compact subset $K$ of $\R^d$, the set $\{ T_x \, | \, x \in K\}$ is equicontinuous. 
If $E$ is barrelled, then every $C_0$-group on $E$ is locally equicontinuous, as follows from the continuity of the mapping \eqref{def} and the Banach-Steinhaus theorem.

Let $E$ be a lcHs and let  $(T_x)_{x \in \R^d}$ be a $C_0$-group on $E$.  The orbit $\gamma_e$ of a vector $e \in E$ is defined  as the mapping $\gamma_e: \R^d \rightarrow E: x \rightarrow T_xe$. The continuity of the mapping \eqref{def} implies that $\gamma_e \in C(\R^d;E)$.
A vector $e \in E$ is called $n$-times differentiable, $n \in \N \, \cup \, \{\infty\}$,  if $\gamma_e \in C^n(\R^d;E)$. The space of all $n$-times differentiable vectors in $E$ is denoted by $E^n$. Fix $n \in \N$. We set
$$
e^{(\alpha)} := \partial^\alpha \gamma_e(0), \qquad e \in E^n, |\alpha| \leq n.
$$
Notice that
$$
\gamma_{e^{(\alpha)}} = \partial^\alpha \gamma_e, \qquad e \in E^n, |\alpha| \leq n.
$$
We endow $E^n$  with the initial topology with respect to the mapping 
$$
E^n \rightarrow \prod_{|\alpha| \leq n} E: e \rightarrow (e^{(\alpha)})_{|\alpha| \leq n},
$$
which means that the topology of $E^n$ is generated by the system of seminorms $\{p_n \, | \, p \in \csn(E)\}$, where
$$
p_n(e) := \max_{|\alpha| \leq n} p(e^{(\alpha)}), \qquad e \in E^n, p \in \csn(E).
$$
Similarly, we endow $E^\infty$ with with the initial topology with respect to the mapping 
$$
E^\infty \rightarrow \prod_{\alpha \in \N^d} E: e \rightarrow (e^{(\alpha)})_{\alpha \in \N^d},
$$
which means that the topology of $E^\infty$ is generated by the system of seminorms $\{p_n \, | \, p \in \csn(E), n \in \N \}$. Let $n \in \N \, \cup \, \{\infty\}$. If $(T_x)_{x \in \R^d}$ is locally equicontinuous, the mapping
$$
E^n \rightarrow C^n(\R^d;E): e \rightarrow \gamma_e
$$
is a topological embedding. In particular, $E^n$ is a Fr\'echet space if $E$ is so.
\section{Convolution with respect to a $C_0$-group}\label{sect-conv}
Let $E$ be  a lcHs and let  $(T_x)_{x \in \R^d}$ be a  $C_0$-group on $E$. In this section, we define and give some basic properties of the convolution product $\varphi \ast_T e$, $\varphi \in C_c(\R^d)$, $e \in E$, with respect to $(T_x)_{x \in \R^d}$. Most importantly, we provide a quantitative convolution approximation identity for differentiable vectors in $E$. This result shall be the crux of the proof of our main theorem given in the next section.  The results in this section are probably well-known, as they are straightforward analogues of basic results from the theory of one-parameter $C_0$-semigroups \cite{H-P}, but we include them here with proofs for the sake of completeness.

We start with a brief discussion about vector-valued integration \cite{Rudin}. A lcHs $E$ is said to satisfy the convex compactness property, for short $(cc)$, if the closed absolutely convex hull of
every compact subset of $E$ is again compact. Every quasi-complete lcHs satisfies $(cc)$, while sequential completeness and $(cc)$ are incomparable, that is, there are lcHs which are sequentially complete but do not satisfy the $(cc)$ and vice versa \cite[p.\ 1421]{Qui}. The property $(cc)$ for $E$ is closely connected to the existence of $E$-valued weak integrals, as we now proceed to explain. A function $\Phi: \R^d \rightarrow E$ is called scalarly integrable if $\langle e', \Phi(\cdot) \rangle \in L^1(\R^d)$ for all $e' \in E'$. In such a case, the mapping
$$
e' \rightarrow \int_{\R^d} \langle e', \Phi(x) \rangle \dx
$$
defines an element of the algebraic dual of $E'$, which we denote by $\int_{\R^d}  \Phi(x) \dx$.  The function $\Phi$ is said to be weakly integrable in $E$ if $\int_{\R^d} \Phi(x) \dx \in E$.
If $E$ satisfies $(cc)$, every $E$-valued compactly supported continuous function $\Phi$ on $\R^d$ is weakly integrable in $E$ \cite[Thm.\ 3.27]{Rudin}. Moreover, we have that
$$
p \left (\int_{\R^d} \Phi(x) \dx \right) \leq \int_{\R^d} p(\Phi(x)) \dx, \qquad  p \in \csn(E),
$$
and
$$
\int_{\R^d}  (S \circ \Phi)(x) \dx = S \left (\int_{\R^d} \Phi(x) \dx \right), \qquad S \in \mathcal{L}(E).
$$

Let $E$ be a lcHs satisfying $(cc)$ and let  $(T_x)_{x \in \R^d}$ be a $C_0$-group on $E$. Given $\varphi \in C_c(\R^d)$ and $e \in E$, we define
$$
\varphi \ast_T e : = \int_{\R^d} \varphi(x) T_{-x}e \dx =  \int_{\R^d} \varphi(x) \gamma_e(-x) \dx \in E.
$$
Notice that
$$
\gamma_{\varphi \ast_T e}(x) =  \int_{\R^d} \varphi(t) \gamma_e(x - t)\dt = \int_{\R^d} \varphi(x - t) \gamma_e(t)\dt, \qquad  \varphi \in C_c(\R^d),e \in E.
$$
We start with the following basic lemma.
\begin{lemma}\label{switch-conv}  Let $E$ be a lcHs satisfying $(cc)$, let  $(T_x)_{x \in \R^d}$ be a locally equicontinuous $C_0$-group on $E$ and let $n \in \N$.
\begin{itemize}
\item[$(i)$] For all $\varphi \in C^n_c(\R^d)$ and $e \in E$ it holds that  $\varphi \ast_T e \in E^n$ and  $(\varphi \ast_T e)^{(\alpha)} = \partial^\alpha \varphi \ast_T e$ for all $|\alpha| \leq n$. Moreover,
for all $p \in \csn(E)$ and $K \subset \R^d$ compact there is $q \in \csn(E)$ such that
$$
p_n(\varphi \ast_T  e) \leq \max_{|\alpha| \leq n } \| \partial^\alpha \varphi\|_{L^1} q(e), \qquad \varphi \in C^n_c(\R^d) \mbox{with $\operatorname{supp} \varphi \subseteq K$}, e \in E.
$$
\item[$(ii)$] For all $\varphi \in C_c(\R^d)$ and $e \in E^n$ it holds that $\varphi \ast_T e \in E^n$ and $(\varphi \ast_T e)^{(\alpha)} = \varphi \ast_T e^{(\alpha)}$ for all $|\alpha| \leq n$. Moreover, for all $p \in \csn(E)$ and $K \subset \R^d$ compact there is $q \in \csn(E)$ such that
$$
p_n(\varphi \ast_T  e) \leq  \| \varphi\|_{L^1} q_n(e), \qquad \varphi \in C_c(\R^d)  \mbox{with $\operatorname{supp} \varphi \subseteq K$}, e \in E^n.
$$
\end{itemize}
\end{lemma}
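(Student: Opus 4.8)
The plan is to prove both statements by differentiating under the integral sign in the two representations
$$\gamma_{\varphi \ast_T e}(x) = \int_{\R^d} \varphi(x-t)\gamma_e(t)\dt = \int_{\R^d}\varphi(t)\gamma_e(x-t)\dt ,$$
the first being adapted to $(i)$ (the derivatives fall on $\varphi$) and the second to $(ii)$ (the derivatives fall on $\gamma_e$). In both cases one reduces, by induction on the length of the multi-index, to proving the formula for a single first-order partial derivative; continuity of the resulting derivatives is then automatic, since each one turns out to be the orbit $\gamma_v$ of a vector $v \in E$ and hence lies in $C(\R^d;E)$.

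For $(i)$, fix $j$, $e \in E$, $\varphi \in C^n_c(\R^d)$ and $x \in \R^d$. Writing the scalar increment of $\varphi$ via the fundamental theorem of calculus gives
$$\frac{\gamma_{\varphi\ast_T e}(x + he_j) - \gamma_{\varphi\ast_T e}(x)}{h} = \int_{\R^d}\left(\int_0^1 \partial_j\varphi(x + \theta h e_j - t)\,{\rm d}\theta\right)\gamma_e(t)\dt ,$$
and for $|h|\le 1$ the integrand is supported in a fixed compact set. Applying a seminorm $p \in \csn(E)$, using $p(\int \Phi\dx) \le \int p(\Phi)\dx$ and the uniform continuity of $\partial_j\varphi$, one sees that this difference quotient tends to $\gamma_{\partial_j\varphi \ast_T e}(x)$ as $h \to 0$; hence $\partial_j\gamma_{\varphi\ast_T e} = \gamma_{\partial_j\varphi\ast_T e}$, which is continuous. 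Iterating (note $\partial^\alpha\varphi \in C^{n-|\alpha|}_c(\R^d) \subseteq C_c(\R^d)$) yields $\varphi \ast_T e \in E^n$ with $(\varphi\ast_T e)^{(\alpha)} = \partial^\alpha\varphi \ast_T e$ for $|\alpha|\le n$. For the estimate, with $K \supseteq \operatorname{supp}\varphi$ compact,
$$p\big((\varphi\ast_T e)^{(\alpha)}\big) = p\Big(\int_{\R^d}\partial^\alpha\varphi(x)T_{-x}e\dx\Big) \le \|\partial^\alpha\varphi\|_{L^1}\sup_{x\in K}p(T_{-x}e) \le \|\partial^\alpha\varphi\|_{L^1}\, q(e),$$
where $q \in \csn(E)$ is chosen, by the equicontinuity of $\{T_{-x}\,|\,x\in K\}$, so that $p(T_{-x}v) \le q(v)$ for all $x \in K$, $v \in E$; taking the maximum over $|\alpha|\le n$ gives the claim.

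For $(ii)$ the argument is symmetric, differentiating $\int_{\R^d}\varphi(t)\gamma_e(x-t)\dt$ in $x$. Since $e \in E^n$, i.e.\ $\gamma_e \in C^n(\R^d;E)$ with $\partial^\alpha\gamma_e = \gamma_{e^{(\alpha)}}$, the difference quotient in the direction $e_j$ equals $\int_{\R^d}\varphi(t)\big(\int_0^1 \partial_j\gamma_e(x+\theta h e_j - t)\,{\rm d}\theta\big)\dt$, where now the fundamental theorem of calculus is used for the $E$-valued $C^1$-curve $\gamma_e$; this is legitimate because the inner integral exists by $(cc)$ and the identity reduces to the scalar case after pairing with elements of $E'$. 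Exploiting the uniform continuity of $\partial_j\gamma_e = \gamma_{e^{(e_j)}}$ on compact sets, the same seminorm argument shows convergence to $\gamma_{\varphi\ast_T e^{(e_j)}}(x)$; iterating gives $\varphi\ast_T e \in E^n$ with $(\varphi\ast_T e)^{(\alpha)} = \varphi\ast_T e^{(\alpha)}$. The estimate then follows from $p(\varphi\ast_T e^{(\alpha)}) = p(\int_{\R^d}\varphi(x)T_{-x}e^{(\alpha)}\dx) \le \|\varphi\|_{L^1}\sup_{x\in K}p(T_{-x}e^{(\alpha)}) \le \|\varphi\|_{L^1}\,q(e^{(\alpha)}) \le \|\varphi\|_{L^1}\,q_n(e)$, with $q$ as above, and taking the maximum over $|\alpha|\le n$.

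The only genuine obstacle is the interchange of differentiation with the $E$-valued integral when $E$ is merely $(cc)$ rather than complete; this is handled by using $(cc)$ to guarantee the existence of the auxiliary weak integrals, reducing the fundamental theorem of calculus for $E$-valued $C^1$-curves to the scalar case by testing against $E'$, and using the uniform continuity of the relevant orbits on compact subsets of $\R^d$ to pass to the limit under the integral sign. The remaining manipulations are routine, relying only on the inequality $p(\int\Phi\dx) \le \int p(\Phi)\dx$ and on local equicontinuity.
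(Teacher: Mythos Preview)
Your proof is correct and follows the same overall strategy as the paper: reduce to a single first-order partial derivative, establish the identity $(\varphi\ast_T e)^{(e_j)} = \partial_j\varphi\ast_T e$ (resp.\ $\varphi\ast_T e^{(e_j)}$), iterate, and then read off the seminorm estimate from local equicontinuity. The paper only writes out $(ii)$, stating that $(i)$ is analogous.

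The technical device you use for passing to the limit is, however, different from the paper's. You rewrite the difference quotient via the fundamental theorem of calculus (for scalar $\varphi$ in $(i)$, for the $E$-valued orbit $\gamma_e$ in $(ii)$, the latter justified by pairing with $E'$ and invoking $(cc)$), and then exploit uniform continuity of $\partial_j\varphi$, respectively of $\gamma_{e^{(e_j)}}$, on compact sets to pass to the limit under the integral. The paper instead writes out the difference quotient minus the target directly, applies the seminorm inequality, and invokes the Lebesgue dominated convergence theorem; the required dominating bound is obtained by showing that the set of integrands is bounded in $E$, which in turn is reduced via Mackey's theorem to weak boundedness and checked by the scalar mean value theorem. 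Your route is slightly more elementary in that it avoids both the dominated convergence theorem and Mackey's theorem; the paper's route has the minor advantage of not needing a vector-valued FTC. Both are standard and equally acceptable.
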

\begin{proof}
We only show $(ii)$ as the proof of $(i)$ goes along the same lines. It suffices to consider the case $n = 1$, the general case then follows by induction. Furthermore,  we only need  to show that $\varphi \ast_T e \in E^1$ and $(\varphi \ast_T e)^{(e_j)} = \varphi \ast_T e^{(e_j)}$ for all $1\leq j \leq d$, $\varphi \in C_c(\R^d)$ and $e \in E^1$. Once this relation is established, the statement about the seminorms is a consequence of the fact that $(T_x)_{x \in \R^d}$ is locally equicontinuous. Fix $x \in \R^d$. Then, for all $p \in \csn(E)$ and $h \in \R \, \backslash \, \{0\}$,
\begin{align*}
& p \left ( \frac{\gamma_{\varphi \ast_T e}(x + he_j) - \gamma_{\varphi \ast_T e}(x) }{h} -  \gamma_{\varphi \ast_T e^{(e_j)}}(x) \right) \\
&= p \left ( \int_{\R^d} \varphi(t) \left(\frac{\gamma_{e}(x + he_j -t) - \gamma_{e}(x-t) }{h} -  \gamma_e^{(e_j)}(x-t)\right) \dt \right)\\
&\leq  \int_{\R^d} |\varphi(t)| p\left(\frac{\gamma_{e}(x + he_j -t) - \gamma_{e}(x-t) }{h} -  \gamma_e^{(e_j)}(x-t)\right) \dt. 
\end{align*}
The result will therefore follow from the Lebesgue dominated convergence theorem if the set 
$$
B = \left\{\frac{\gamma_{e}(x + he_j -t) - \gamma_{e}(x-t) }{h} -  \gamma_e^{(e_j)}(x-t) \, | \, t \in \operatorname{supp} \varphi, 0 < |h| \leq 1 \right \}
$$
is bounded in $E$. By Mackey's theorem, it suffices to show that $B$ is weakly bounded in $E$. Let $e' \in E'$ be arbitrary and set $f_{e'} = \langle e', \gamma_e(\,\cdot \,) \rangle \in C^1(\R^d)$. Then,
\begin{align*}
&\left | \left \langle e', \frac{\gamma_{e}(x + he_j -t) - \gamma_{e}(x-t) }{h} -  \gamma_e^{(e_j)}(x-t) \right \rangle \right | \\
&= \left |\frac{f_{e'}(x+he_j - t) - f_{e'}(x-t)}{h} - f_{e'}^{(e_j)}(x-t) \right| \\
&\leq 2 \max\{|f_{e'}^{(e_j)}(x + ke_j -u) | \, | \, u \in \operatorname{supp} \varphi, |k| \leq 1 \}
\end{align*}
for all $t \in \operatorname{supp} \varphi$ and $0 < |h| \leq 1$.
\end{proof}
Next, we study the approximation of vectors in $E$ via convolution.
\begin{lemma}\label{approx}
Let $E$ be a lcHs satisfying $(cc)$ and let  $(T_x)_{x \in \R^d}$ be a locally equicontinuous $C_0$-group on $E$. Let $\varphi \in C_c(\R^d)$ with $\int_{\R^d} \varphi(x) \dx = 1$ and set $ \varphi_r = r^{-d} \varphi( \, \cdot \, / r)$  for $r > 0$. Then,
\begin{equation}
\lim_{r \to 0^+} \varphi_r \ast_T e = e 
\label{limit}
\end{equation}
for all $e \in E$.
\end{lemma}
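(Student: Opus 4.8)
The plan is to reduce the vector-valued statement to scalar convergence plus a uniform boundedness argument, mirroring the classical proof of approximate identities but adapted to the locally convex setting. First I would fix $e \in E$ and $p \in \csn(E)$, and write, using the normalization $\int_{\R^d} \varphi(x)\dx = 1$ and the change of variables $x \mapsto rx$,
\begin{align*}
\varphi_r \ast_T e - e &= \int_{\R^d} \varphi_r(x)\big(T_{-x}e - e\big)\dx = \int_{\R^d} \varphi(y)\big(T_{-ry}e - e\big)\dy,
\end{align*}
so that $p(\varphi_r \ast_T e - e) \leq \int_{\R^d} |\varphi(y)|\, p\big(T_{-ry}e - e\big)\dy$, invoking the seminorm estimate for weak integrals available under $(cc)$.

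Next I would apply the Lebesgue dominated convergence theorem to this last integral over the fixed compact set $\operatorname{supp}\varphi$. For fixed $y$, the integrand $|\varphi(y)|\, p(T_{-ry}e - e) \to 0$ as $r \to 0^+$ by property $(iii)$ of a $C_0$-group (the strong continuity $\lim_{x \to 0} T_x e = e$). For the dominating function, I would use local equicontinuity: the set $\{T_x \,|\, x \in K\}$ is equicontinuous for $K$ a compact neighbourhood of $0$ containing $-ry$ for all $y \in \operatorname{supp}\varphi$ and $0 < r \leq 1$, so there is $q \in \csn(E)$ with $p(T_{-ry}e) \leq q(e)$ uniformly, giving the integrable bound $|\varphi(y)|\,(q(e) + p(e))$ independent of $r$. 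Hence the integral tends to $0$, and since $p$ was arbitrary, $\varphi_r \ast_T e \to e$.

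The main obstacle is a minor technical one: justifying that the integrand $r \mapsto p(T_{-ry}e - e)$ is measurable in $r$ (or handling the limit along sequences $r_k \to 0^+$ rather than a continuous parameter) and that dominated convergence applies cleanly. This is resolved by noting that $r \mapsto T_{-ry}e$ is continuous by the continuity of the orbit $\gamma_e$, so the integrand is continuous in $r$, and one may pass to the limit along an arbitrary sequence $r_k \to 0^+$; since every such sequence gives the limit $e$, the net limit \eqref{limit} follows. No deeper difficulty arises: the essential inputs are $(cc)$ for the seminorm bound on the integral, strong continuity for the pointwise limit, and local equicontinuity for the uniform domination.
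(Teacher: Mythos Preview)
Your proof is correct and follows essentially the same approach as the paper: rewrite $\varphi_r \ast_T e - e$ as $\int \varphi(y)(T_{-ry}e - e)\dy$ via the change of variables, pass the seminorm inside the integral, and apply dominated convergence using strong continuity for the pointwise limit. You are in fact slightly more explicit than the paper, which invokes dominated convergence without spelling out the dominating function coming from local equicontinuity.
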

\begin{proof}
Let $p \in \csn(E)$ be arbitrary. Then,
\begin{align*}
\lim_{r \to 0^+}p( \varphi_r \ast_T e - e) &= \lim_{r \to 0^+} p\left(\int_{\R^d} \varphi(x)(T_{-rx} e - e) \dx \right) \\
&\leq \lim_{r \to 0^+}\int_{\R^d} |\varphi(x)| p(T_{-rx} e - e) \dx = 0,
\end{align*}
where the last step follows from the Lebesgue dominated convergence theorem and the fact that $T_t e \rightarrow e$ in $E$ as $t \rightarrow 0$.
\end{proof}
 We now show that the rate of convergence in the approximation \eqref{limit} is of type $O(r)$ for $e \in E^1$.
\begin{proposition} \label{controlled-approx}
Let $E$ be a lcHs satisfying $(cc)$ and let  $(T_x)_{x \in \R^d}$ be a locally equicontinuous $C_0$-group on $E$. Let $\varphi \in C_c(\R^d)$ with $\int_{\R^d} \varphi(x) \dx = 1$ and set $ \varphi_r = r^{-d} \varphi( \, \cdot \, / r)$  for $r > 0$. Then, for all $p \in \csn(E)$ there  is  $q \in \csn(E)$  such that
$$
p(\varphi_r \ast_T e - e) \leq r q_1(e), \qquad e \in E^1, r \leq 1.
$$
\end{proposition}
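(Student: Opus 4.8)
The plan is to expand the orbit $\gamma_e$ to first order by the fundamental theorem of calculus along the segment from $0$ to $-rx$, and then to control the resulting remainder by local equicontinuity. I would begin by fixing $p \in \csn(E)$ and setting $K = \operatorname{supp}\varphi$, a compact subset of $\R^d$. Since every point of the form $-srx$ with $x \in K$, $0 \le s \le 1$, and $0 < r \le 1$ lies in the fixed compact set $K' := \{-tx : x \in K,\ 0 \le t \le 1\}$, local equicontinuity of $(T_x)_{x \in \R^d}$ provides a seminorm $\widetilde q \in \csn(E)$ with $p(T_y f) \le \widetilde q(f)$ for all $y \in K'$ and $f \in E$. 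After the change of variables $x \mapsto rx$ in the defining integral and using $\int_{\R^d}\varphi(x)\,\dx = 1$, one has $\varphi_r \ast_T e - e = \int_{\R^d}\varphi(x)\,(\gamma_e(-rx) - \gamma_e(0))\,\dx$.

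Next, for $e \in E^1$ (so that $\gamma_e \in C^1(\R^d;E)$) and $x$ fixed, the map $[0,1] \to E : s \mapsto \gamma_e(-srx)$ is $C^1$ with derivative $s \mapsto -\sum_{j=1}^d x_j\,\gamma_{e^{(e_j)}}(-srx)$, and I would establish the $E$-valued identity
$$
\gamma_e(-rx) - \gamma_e(0) = -\,r\int_0^1 \sum_{j=1}^d x_j\,\gamma_{e^{(e_j)}}(-srx)\,{\rm d}s,
$$
the right-hand side being a weak integral that exists by $(cc)$ since its integrand is continuous on the compact interval $[0,1]$. To justify this identity one pairs both sides with an arbitrary $e' \in E'$ and applies the scalar fundamental theorem of calculus to $s \mapsto \langle e', \gamma_e(-srx)\rangle$; since $E'$ separates the points of $E$, the vector identity follows. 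Substituting it into the integral over $\R^d$ (whose integrand is again continuous, compactly supported and hence weakly integrable) gives
$$
\varphi_r \ast_T e - e = -\,r\int_{\R^d}\varphi(x)\int_0^1\sum_{j=1}^d x_j\,\gamma_{e^{(e_j)}}(-srx)\,{\rm d}s\,\dx .
$$

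Finally, applying $p$, estimating the weak integrals by the standard bound $p\big(\int \Phi\big) \le \int p(\Phi)$, and using $\gamma_{e^{(e_j)}}(-srx) = T_{-srx}e^{(e_j)}$ together with the equicontinuity bound from the first step, I obtain
$$
p(\varphi_r \ast_T e - e) \le r\sum_{j=1}^d \widetilde q\big(e^{(e_j)}\big)\int_{\R^d}|x_j|\,|\varphi(x)|\,\dx \le r\,C\,\max_{1 \le j \le d}\widetilde q\big(e^{(e_j)}\big),
$$
where $C := \sum_{j=1}^d \int_{\R^d}|x_j|\,|\varphi(x)|\,\dx \in (0,\infty)$. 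Since $\max_{1\le j\le d}\widetilde q(e^{(e_j)}) \le \widetilde q_1(e)$, the statement holds with $q := C\widetilde q \in \csn(E)$. The only genuinely delicate point is the justification of the vector-valued fundamental theorem of calculus used in the second step; this is disposed of cleanly by reducing to the scalar case via the property $(cc)$ and the separation of points by $E'$, after which everything reduces to elementary estimates.
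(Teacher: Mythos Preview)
Your proof is correct and follows essentially the same approach as the paper: both use the fundamental theorem of calculus along the orbit together with local equicontinuity to extract the factor~$r$. The only organizational difference is that the paper first isolates the pointwise bound $p(T_x e - e) \le |x|\,q_1(e)$ for $|x|\le R$ as a separate lemma (proved via the polar-set formula $p(f)=\sup_{e'\in U^\circ}|\langle e',f\rangle|$ and the scalar FTC), and then inserts this into the estimate $p(\varphi_r\ast_T e - e)\le \int|\varphi(x)|\,p(T_{-rx}e - e)\,\dx$, whereas you carry out the FTC step directly inside the convolution integral via a vector-valued identity justified by duality and $(cc)$; the underlying computation is the same.
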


Proposition \ref{controlled-approx} is a consequence of the next lemma (cf.\ the proof of Lemma \ref{approx}).
\begin{lemma}
Let $R > 0$. Then, for all $p \in \csn(E)$ there is $q \in \csn(E)$ such that
$$
p(T_xe - e) \leq |x| q_1(e), \qquad e \in E^1, |x| \leq R.
$$
\end{lemma}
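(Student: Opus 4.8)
The plan is to integrate the derivative of the orbit $\gamma_e$ along the segment joining $0$ to $x$. Fix $e \in E^1$ and $x \in \R^d$ with $|x| \leq R$, and set $c : [0,1] \to E : t \mapsto \gamma_e(tx)$. Since $\gamma_e \in C^1(\R^d;E)$, the chain rule gives $c \in C^1([0,1];E)$ with
$$
c'(t) = \sum_{j=1}^d x_j\, (\partial_j \gamma_e)(tx) = \sum_{j=1}^d x_j\, \gamma_{e^{(e_j)}}(tx) = \sum_{j=1}^d x_j\, T_{tx}\, e^{(e_j)},
$$
where we used the identity $\partial^\alpha \gamma_e = \gamma_{e^{(\alpha)}}$ from Section \ref{sect-prelim}. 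The first step is to establish the vector-valued fundamental theorem of calculus
$$
T_x e - e = c(1) - c(0) = \int_0^1 c'(t) \,\dt = \sum_{j=1}^d x_j \int_0^1 T_{tx}\, e^{(e_j)} \,\dt,
$$
in which the $E$-valued integrals exist because their integrands are continuous on the compact interval $[0,1]$ and $E$ satisfies $(cc)$ (cf.\ the discussion of weak integrals in Section \ref{sect-conv}); the identity itself follows by pairing with an arbitrary $e' \in E'$, applying the scalar fundamental theorem of calculus to $t \mapsto \langle e', c(t)\rangle \in C^1([0,1])$, and invoking the Hahn--Banach theorem, as both sides already lie in $E$.

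Next, let $p \in \csn(E)$ be arbitrary. Applying $p$ and the elementary estimate $p(\int \Phi) \leq \int p(\Phi)$ recalled in Section \ref{sect-conv},
$$
p(T_x e - e) \leq \sum_{j=1}^d |x_j| \int_0^1 p\big(T_{tx}\, e^{(e_j)}\big) \,\dt.
$$
Since $|tx| \leq R$ for all $t \in [0,1]$ and $(T_x)_{x \in \R^d}$ is locally equicontinuous, there exists $q \in \csn(E)$, depending only on $p$ and $R$, such that $p(T_y f) \leq q(f)$ for all $f \in E$ and all $|y| \leq R$. Inserting this bound and using $q(e^{(e_j)}) \leq q_1(e)$ together with $\sum_{j=1}^d |x_j| \leq \sqrt{d}\,|x|$ yields $p(T_x e - e) \leq \sqrt{d}\,|x|\,q_1(e)$; replacing $q$ by the continuous seminorm $\sqrt{d}\, q$, for which $(\sqrt{d}\, q)_1 = \sqrt{d}\, q_1$, gives the claimed inequality.

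I expect the only delicate point to be the vector-valued fundamental theorem of calculus under the sole hypothesis $(cc)$: one has to know beforehand that $\int_0^1 c'(t)\,\dt$ is an element of $E$ --- which $(cc)$ guarantees, since the range $c'([0,1])$ is compact and the Riemann sums of the integral lie in its closed absolutely convex hull --- before the Hahn--Banach reduction to the scalar statement is legitimate. Alternatively, one can bypass the vector integral altogether by writing $p(T_x e - e) = \sup |\langle e', T_x e - e \rangle|$ over the seminorm-dominated functionals $e' \in E'$, applying the scalar estimate above to each $f_{e'} = \langle e', \gamma_e(\cdot)\rangle$, and then using local equicontinuity. Everything else is routine. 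Finally, Proposition \ref{controlled-approx} follows by repeating the computation in the proof of Lemma \ref{approx}: after the substitution $\varphi_r \ast_T e - e = \int_{\R^d} \varphi(x)(T_{-rx} e - e)\,\dx$, choosing $R$ with $\operatorname{supp}\varphi \subseteq \{|x| \leq R\}$ and noting $|{-rx}| \leq R$ for $r \leq 1$, the lemma gives
$$
p(\varphi_r \ast_T e - e) \leq \int_{\R^d} |\varphi(x)|\, r|x|\, q_1(e)\,\dx = r \left( \int_{\R^d} |x|\,|\varphi(x)|\,\dx \right) q_1(e),
$$
and one absorbs the finite constant $\int_{\R^d} |x|\,|\varphi(x)|\,\dx$ into $q$.
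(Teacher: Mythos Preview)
Your proof is correct and follows essentially the same idea as the paper: integrate the derivative of the orbit along the segment from $0$ to $x$ and use local equicontinuity to bound $p(T_{tx}e^{(e_j)})$ uniformly in $t$. The only organizational difference is that the paper works dually throughout---writing $p(T_xe-e)=\sup_{e'\in U^\circ}|f_{e'}(x)-f_{e'}(0)|$ and applying the scalar fundamental theorem of calculus to each $f_{e'}$---whereas you first invoke the vector-valued fundamental theorem of calculus (legitimately, via $(cc)$) and then estimate; you yourself note the dual variant as an alternative, and it is precisely the route the paper takes.
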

\begin{proof}
Since $(T_x)_{x \in \R^d}$ is locally equicontinuous, there is $q' \in \csn(E)$ such that $p(T_xe) \leq q'(e)$ for all $e \in E$ and $|x| \leq R$. Set $U = \{e \in E \, | \, p(e) \leq 1 \}$ and denote by $U^\circ$ the polar set of $U$ in $E'$. As before, we write $f_{e'} = \langle e', \gamma_e(\,\cdot \,) \rangle \in C^1(\R^d)$ for $e \in E^1$ and  $e' \in E'$. Then,
\begin{align*}
&p(T_xe-e) = \sup_{e' \in U^\circ} | f_{e'}(x) - f_{e'}(0) | \leq |x| \sup_{e' \in U^\circ}\sum_{j=1}^d \int_0^1 |f_{e'}^{(e_j)}(tx)| \dt \\
&=|x| \sup_{e' \in U^\circ} \sum_{j=1}^d \int_0^1 |\langle e', \gamma_{e^{(e_j)}}(tx) \rangle  | \dt \leq |x| \sum_{j=1}^d \int_0^1 p(T_{tx}e^{(e_j)})\dt \leq |x|dq'_1(e)
\end{align*}
for all $e \in E^1$ and $|x| \leq R$. Hence, the statement holds with $q = dq' \in \csn(E)$.
\end{proof}

\section{Quasinormability of $E^\infty$}\label{sect-quasi}
This section contains the main results of the first part of this article. Given a lcHs $E$ satisfying $(cc)$ and a locally equicontinuous $C_0$-group $(T_x)_{x \in \R^d}$ on $E$,  it is shown that the space $E^\infty$ is quasinormable if $E$ is so. In case $E$ is a Fr\'echet space, we also prove that $E^\infty$ satisfies the linear topological invariant $(\Omega)$ of Vogt \cite{M-V} if $E$ does so. Finally, by using a result from \cite{B-E}, we give an example that shows that the converse of this theorem does not hold in general.

A lcHs $E$ is said to be quasinormable \cite[p.\ 313]{M-V} if
$$
\forall U \in \mathcal{U}_0(E) \, \exists V  \in \mathcal{U}_0(E) \, \forall \varepsilon > 0 \, \exists B \in \mathcal{B}(E) \, : \, V \subset \varepsilon U + B,
$$
or, equivalently,
\begin{gather*}
\forall p \in \csn(E) \, \exists q \in \csn(E)  \,  \forall \varepsilon > 0 \, \exists B \in \mathcal{B}(E)\, : \\
 \forall x \in E \mbox{ with } q(x) \leq 1 \, \exists y \in B \mbox{ such that } p(x -y) \leq \varepsilon.
\end{gather*}
Obviously, every Banach space is quasinormable, while a lcHs is Schwartz if and only if it is quasinormable and semi-Montel. Furthermore, every quasinormable Fr\'echet space is distinguished \cite[Cor.\ 26.19]{M-V}.

We are ready to prove the main result of this section.
\begin{theorem} \label{main}
Let $E$ be a lcHs satisfying $(cc)$ and let $(T_x)_{x \in \R^d}$ be a locally equicontinuous $C_0$-group on $E$. Then, $E^\infty$ is quasinormable if $E$ is so.
\end{theorem}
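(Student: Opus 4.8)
The plan is to verify the (seminorm form of the) quasinormability condition for $E^\infty$ stated just above the theorem, using the convolution operators as the device that upgrades vectors of $E$ to smooth vectors while keeping quantitative control of the approximation. Fix once and for all a function $\varphi\in\mathcal{D}(\R^d)$ with $\int_{\R^d}\varphi(x)\dx=1$ and $\operatorname{supp}\varphi\subseteq\{x\in\R^d:|x|\le1\}$, set $\varphi_r=r^{-d}\varphi(\,\cdot\,/r)$ for $r>0$, and denote by $C_r$ the linear operator $e\mapsto\varphi_r\ast_T e$. Two facts drive the argument. \emph{First}, by Lemma~\ref{switch-conv}$(i)$ applied for every $n$ (which is the reason for taking $\varphi$ smooth rather than merely continuous), $C_r$ maps $E$ \emph{into $E^\infty$} and is continuous from $E$ to $E^\infty$; moreover, since $\operatorname{supp}\varphi_r\subseteq\{|x|\le1\}$ whenever $r\le1$, for every $p\in\csn(E)$ and $n\in\N$ there is a seminorm $\widetilde q\in\csn(E)$, \emph{not depending on $r$}, with
$$
p_n(C_re)\le\Big(\max_{|\alpha|\le n}\|\partial^\alpha\varphi_r\|_{L^1}\Big)\widetilde q(e)\le r^{-n}C_\varphi\,\widetilde q(e),\qquad e\in E,\ 0<r\le1,
$$
where $C_\varphi:=\max_{|\alpha|\le n}\|\partial^\alpha\varphi\|_{L^1}$. \emph{Second}, combining Proposition~\ref{controlled-approx} with Lemma~\ref{switch-conv}$(ii)$ (which identifies $(C_re)^{(\alpha)}$ with $C_re^{(\alpha)}$ and $(e^{(\alpha)})^{(\beta)}$ with $e^{(\alpha+\beta)}$), for every $p\in\csn(E)$ and $n\in\N$ there is $q\in\csn(E)$, again \emph{independent of $r$}, with $p_n(C_re-e)\le r\,q_{n+1}(e)$ for all $e\in E^\infty$ and $0<r\le1$.

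Now fix $p\in\csn(E)$ and $n\in\N$. Take $q$ as in the second fact and $\widetilde q$ as in the first, and apply the quasinormability of $E$ to the seminorm $\widetilde q$: there is $\widehat q\in\csn(E)$ such that for every $\delta>0$ there is $B'\in\mathcal{B}(E)$ with $\{x\in E:\widehat q(x)\le1\}\subseteq\delta\{x\in E:\widetilde q(x)\le1\}+B'$. Put $q^\ast:=\max(q,\widehat q)\in\csn(E)$ and $m:=n+1$. These depend only on $p$ and $n$; in particular they are chosen \emph{before} $\varepsilon$.

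Let $\varepsilon>0$. Set $r:=\min\{1,\varepsilon/2\}$ and $\delta:=\varepsilon r^{n}/(2C_\varphi)$, let $B'\in\mathcal{B}(E)$ be the corresponding bounded set, and put $B:=C_r(B')$, which is bounded in $E^\infty$ because $C_r\colon E\to E^\infty$ is linear and continuous. If $e\in E^\infty$ satisfies $q^\ast_m(e)\le1$, then $q_{n+1}(e)\le q^\ast_m(e)\le1$ and $\widehat q(e)=\widehat q_0(e)\le\widehat q_{n+1}(e)\le q^\ast_m(e)\le1$, so we may write $e=y+b'$ with $\widetilde q(y)\le\delta$ and $b'\in B'$. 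Using linearity of $C_r$ we get $e-C_rb'=(e-C_re)+C_ry$, whence
\begin{align*}
p_n(e-C_rb')&\le p_n(e-C_re)+p_n(C_ry)\le r\,q_{n+1}(e)+r^{-n}C_\varphi\,\widetilde q(y)\\
&\le r+r^{-n}C_\varphi\,\delta\le\frac{\varepsilon}{2}+\frac{\varepsilon}{2}=\varepsilon.
\end{align*}
Since $C_rb'\in B$, this yields $\{e\in E^\infty:q^\ast_m(e)\le1\}\subseteq\varepsilon\{e\in E^\infty:p_n(e)\le1\}+B$, which is exactly the required condition.

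The only genuinely delicate point is the order of quantifiers. The seminorm $q^\ast$ and the order $m$ that define the ``small'' neighbourhood of $0$ in $E^\infty$ must be fixed before $\varepsilon$ is revealed, while the mollification parameter $r$ --- on which the constant $r^{-n}C_\varphi$ in the first fact genuinely blows up as $r\to0^+$ --- can only be chosen afterwards, in terms of $\varepsilon$. This is precisely why one needs the seminorm $\widetilde q$ in Lemma~\ref{switch-conv}$(i)$ and the seminorm $q$ in Proposition~\ref{controlled-approx} to be \emph{uniform in} $r$; granting that, the remainder is a routine $\varepsilon/2$ splitting. The only other point requiring care is to take $\varphi\in\mathcal{D}(\R^d)$ (not merely in $C_c(\R^d)$), so that $C_r$ really maps $E$ into $E^\infty$, which is what makes $B=C_r(B')$ bounded in $E^\infty$.
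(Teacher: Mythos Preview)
Your proof is correct and follows essentially the same approach as the paper's: choose a smooth mollifier $\varphi$, use Lemma~\ref{switch-conv}$(i)$ to bound $p_n$ of the convolution by $r^{-n}$ times a fixed seminorm on $E$, use Proposition~\ref{controlled-approx} together with Lemma~\ref{switch-conv}$(ii)$ to bound $p_n(C_re-e)$ by $r\,q_{n+1}(e)$, apply the quasinormability of $E$ to the first of these seminorms, and then combine via an $\varepsilon/2$ splitting with $r$ chosen proportional to $\varepsilon$. The only differences are cosmetic (your $y$ and $b'$ correspond to the paper's $e-y$ and $y$, and you carry the constant $C_\varphi$ explicitly rather than absorbing it), together with your added commentary on the quantifier order, which is helpful but not new content.
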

\begin{proof}
It suffices to show that
\begin{gather*}
\forall p \in \csn(E) \, \forall n \in \N \, \exists q \in \csn(E) \, \exists m \in \N \,  \forall \varepsilon > 0 \, \exists B \in \mathcal{B}(E^\infty)\, : \\ 
 \forall e \in E^\infty \mbox{ with } q_m(e) \leq 1 \, \exists x \in B \mbox{ such that } p_n(e -x) \leq \varepsilon.
\end{gather*}
Let $p \in \csn(E)$ and $n \in \N$ be arbitrary. Pick $\varphi \in \mathcal{D}(\R^d)$ with $\int_{\R^d} \varphi(x) \dx = 1$ and set $ \varphi_r = r^{-d} \varphi( \, \cdot \, / r)$  for $r > 0$. Lemma \ref{switch-conv}$(i)$ implies that there is $p'' \in \csn(E)$ such that
\begin{equation}
p_n( \varphi_r \ast_T e)\leq  \max_{|\alpha| \leq n } \| \partial^\alpha \varphi_r\|_{L^1} p''(e) \leq \frac{1}{r^n}p'(e), \qquad e \in E, r \leq 1,
\label{bound-proof}
\end{equation}
where $p' = (\max_{|\alpha| \leq n } \| \partial^\alpha \varphi\|_{L^1})p'' \in \csn(E)$. Lemma \ref{switch-conv}$(ii)$ and Proposition \ref{controlled-approx}  yield that there is $q' \in \csn(E)$ such that
\begin{equation}
p_n(\varphi_r \ast_T e - e) \leq r q'_{n+1}(e), \qquad e \in E^{n+1}, r \leq 1.
\label{bound-proof-2}
\end{equation}
Since $E$ is quasinormable, there is $s \in \csn(E)$ such that
\begin{equation}
\forall \delta > 0 \, \exists A \in \mathcal{B}(E) \,: \, \forall e \in E \mbox{ with } s(e) \leq 1 \, \exists y \in A \mbox{ such that } p'(e -y) \leq \delta.
\label{known}
\end{equation}
Set $q = \max\{s,q'\} \in \csn(E)$ and $m = n + 1$. Let $\varepsilon > 0$ be arbitrary. We may assume without loss of generality that $\varepsilon \leq 1$. Choose $A \in  \mathcal{B}(E)$ according to \eqref{known} with $\delta = (\varepsilon/2)^{n+1}$.
Set $B = \{ \varphi_{\varepsilon/2} \ast_T y \, | \, y \in A\} \in  \mathcal{B}(E^\infty)$. Now let $e \in E^\infty$ with $q_m(e) \leq 1$ be arbitrary. Since $s(e) \leq q_m(e) \leq 1$ there is $y \in A$ such that $p'(e-y) \leq  (\varepsilon/2)^{n+1}$. Set $x = \varphi_{\varepsilon/2} \ast_T y \in B$. Then, by \eqref{bound-proof} and \eqref{bound-proof-2}, we have that
$$
p_n(e - x) \leq p_n(e - \varphi_{\varepsilon/2} \ast_T e) + p_n( \varphi_{\varepsilon/2} \ast_T e - \varphi_{\varepsilon/2} \ast_T y) \leq \varepsilon.
$$
\end{proof}
Every quasinormable lcHs $E$ clearly satisfies
$$
\forall  U \in \mathcal{U}_0(E) \, \exists V \in \mathcal{U}_0(E) \, \forall W \in \mathcal{U}_0(E) \, \forall \varepsilon > 0 \, \exists R > 0 \, : \, V \subset \varepsilon U + R W.
$$
If $E$ is a Fr\'echet space, this condition is also sufficient for $E$ to be quasinormable \cite[Lemma 26.14]{M-V}. On the other hand, a Fr\'echet space $E$ is said to satisfy condition $(\Omega)$ \cite[p.\ 367]{M-V} if 
$$
\forall  U \in \mathcal{U}_0(E) \, \exists V \in \mathcal{U}_0(E) \, \forall W \in \mathcal{U}_0(E) \, \exists C,s > 0 \,\forall \varepsilon > 0 \, : \, V \subset \varepsilon U + \frac{C}{\varepsilon^s} W.
$$
Hence, in the class of Fr\'echet spaces, $(\Omega)$ may be considered as a quantified version of quasinormability. A careful inspection of the proof of Theorem \ref{main} shows that the following result holds.
\begin{theorem}
Let $E$ be a Fr\'echet space and let $(T_x)_{x \in \R^d}$ be a $C_0$-group on $E$. Then, $E^\infty$ satisfies $(\Omega)$ if $E$ does so.
\end{theorem}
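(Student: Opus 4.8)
The plan is to re-run the proof of Theorem~\ref{main} almost verbatim, replacing its single appeal to quasinormability of $E$ by condition $(\Omega)$, and to keep track of the dependence on $\varepsilon$ of the approximating vector through the convolution estimates. First I would note that a Fr\'echet space $E$ is complete, hence satisfies $(cc)$, and barrelled, hence every $C_0$-group on it is locally equicontinuous; so Lemmas~\ref{switch-conv} and \ref{approx} and Proposition~\ref{controlled-approx} are available, and $E^\infty$ is again a Fr\'echet space. Since the closed unit balls $\{p_n\le 1\}$, $p\in\csn(E)$, $n\in\N$, form a base of neighbourhoods of $0$ in $E^\infty$, it suffices to show: for every $p\in\csn(E)$ and $n\in\N$ there are $q\in\csn(E)$ and $m\in\N$ such that for every $r\in\csn(E)$ and $k\in\N$ there are $C,s>0$ with, for all $\varepsilon>0$,
\[
\{e\in E^\infty : q_m(e)\le 1\}\subset \varepsilon\{e\in E^\infty : p_n(e)\le 1\}+\frac{C}{\varepsilon^{s}}\{e\in E^\infty : r_k(e)\le 1\}.
\]

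I would fix $p$ and $n$, pick $\varphi\in\mathcal{D}(\R^d)$ with $\int_{\R^d}\varphi(x)\,\dx=1$, and set $\varphi_\rho=\rho^{-d}\varphi(\,\cdot\,/\rho)$. Exactly as in \eqref{bound-proof} and \eqref{bound-proof-2} this produces $p'\in\csn(E)$ with $p_n(\varphi_\rho\ast_T e)\le\rho^{-n}p'(e)$ for $e\in E$ and $\rho\le1$, and $q'\in\csn(E)$ with $p_n(\varphi_\rho\ast_T e-e)\le\rho\,q'_{n+1}(e)$ for $e\in E^{n+1}$ and $\rho\le1$. Next I would apply $(\Omega)$ for $E$ to the seminorm $p'$; this yields $s_0\in\csn(E)$, depending only on $p$ and $n$, such that for every $\widetilde r\in\csn(E)$ there exist $C_0,s_1>0$ with the property that every $e$ satisfying $s_0(e)\le1$ can be written $e=(e-y)+y$ with $p'(e-y)\le\delta$ and $\widetilde r(y)\le C_0\delta^{-s_1}$, for any prescribed $\delta>0$. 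I would then set $q=\max\{p,p',q',s_0\}$ and $m=n+1$; in particular $q\ge p$ and $m\ge n$, so $q_m\ge p_n$ on $E^\infty$.

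Given now a third seminorm $r$ and index $k$, the same reasoning that gives \eqref{bound-proof}, applied via Lemma~\ref{switch-conv}$(i)$ to $(r,k)$, produces $r'\in\csn(E)$ and a constant $c>0$ (a maximum of $L^1$-norms of derivatives of $\varphi$) with $r_k(\varphi_\rho\ast_T z)\le c\,\rho^{-k}r'(z)$ for $z\in E$ and $\rho\le1$; feeding $\widetilde r=r'$ into the clause of $(\Omega)$ above produces $C_0,s_1$. For $0<\varepsilon\le1$ I would argue as in Theorem~\ref{main}: put $\delta=(\varepsilon/2)^{n+1}$, let $e\in E^\infty$ with $q_m(e)\le1$ (so $s_0(e)\le1$ and $q'_{n+1}(e)\le1$), pick $y$ as above, and set $x=\varphi_{\varepsilon/2}\ast_T y\in E^\infty$. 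The same two-term split yields
\[
p_n(e-x)\le p_n(e-\varphi_{\varepsilon/2}\ast_T e)+p_n(\varphi_{\varepsilon/2}\ast_T(e-y))\le\tfrac{\varepsilon}{2}\,q'_{n+1}(e)+(\varepsilon/2)^{-n}p'(e-y)\le\varepsilon,
\]
while additionally $r_k(x)\le c(\varepsilon/2)^{-k}r'(y)\le c(\varepsilon/2)^{-k}C_0\delta^{-s_1}=C\varepsilon^{-s}$ with $C=cC_02^{\,k+(n+1)s_1}$ and $s=k+(n+1)s_1$. For $\varepsilon>1$ the inclusion is trivial because $q_m\ge p_n$ gives $\{q_m\le1\}\subset\{p_n\le1\}\subset\varepsilon\{p_n\le1\}$. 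This establishes $(\Omega)$ for $E^\infty$.

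I do not expect any genuine obstacle: the only ingredient beyond the proof of Theorem~\ref{main} is the bookkeeping of powers of $1/\varepsilon$. The gain $\rho=\varepsilon/2$ from the convolution approximation \eqref{bound-proof-2} must simultaneously absorb the loss $\rho^{-n}$ coming from differentiating the mollifier in \eqref{bound-proof} --- this is the balance $\delta=(\varepsilon/2)^{n+1}$, already forced in Theorem~\ref{main} --- and dominate the polynomial blow-up $\delta^{-s_1}$ of the $(\Omega)$-witness $y$ together with the further loss $\rho^{-k}$ incurred when one convolves $y$ against $\varphi_\rho$ and measures the result in the $r_k$-seminorm; since each of these factors is a power of $1/\varepsilon$, so is their product, which is exactly what $(\Omega)$ demands. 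The one point requiring care is the quantifier order: $s_0$, and hence $q$ and $m$, must be chosen from $p$ and $n$ alone, before the seminorm $r$ and index $k$ enter --- and this is precisely guaranteed by the $\forall U\,\exists V\,\forall W\,\exists C,s$ shape of condition $(\Omega)$.
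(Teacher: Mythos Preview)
Your proposal is correct and is precisely the ``careful inspection of the proof of Theorem~\ref{main}'' that the paper invokes in lieu of a written proof. You have faithfully re-run that argument, replacing the quasinormability clause \eqref{known} by the $(\Omega)$-decomposition and tracking the resulting powers of $1/\varepsilon$; the two additional points you supply---estimating $r_k(x)$ via Lemma~\ref{switch-conv}$(i)$ applied to the third seminorm, and enlarging $q$ by $p$ to dispose of the range $\varepsilon>1$---are exactly the adjustments needed, and your attention to the quantifier order (choosing $s_0$, hence $q,m$, before $r,k$ enter) is the one genuinely delicate point.
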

Since every Banach space satisfies $(\Omega)$, we obtain the following corollary.
\begin{corollary}\label{Banach-omega}
Let $E$ be a Banach space and let $(T_x)_{x \in \R^d}$ be a $C_0$-group on $E$. Then, $E^\infty$ satisfies $(\Omega)$. In particular, $E^\infty$ is quasinormable and distinguished.
\end{corollary}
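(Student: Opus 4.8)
The plan is to derive this corollary immediately from the preceding theorem, so the only work is to verify its hypotheses and then unwind the two ``in particular'' assertions. Since $E$ is a Banach space it is barrelled, and hence, as observed in Section~\ref{sect-prelim}, the $C_0$-group $(T_x)_{x \in \R^d}$ is automatically locally equicontinuous; moreover $E^\infty$ is a Fr\'echet space, again as recorded in Section~\ref{sect-prelim}. Thus the standing assumptions of the preceding theorem are satisfied, and it applies once we know that $E$ itself satisfies $(\Omega)$.

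Checking that a normed space satisfies $(\Omega)$ is trivial: every $U \in \mathcal{U}_0(E)$ absorbs and is absorbed by a scalar multiple of the unit ball, so it suffices to take $U = V = W$ equal to the unit ball, and then $V \subseteq \varepsilon U + \varepsilon^{-1} W$ for $\varepsilon \leq 1$ while $V \subseteq \varepsilon U$ for $\varepsilon \geq 1$; hence the defining inclusion $V \subseteq \varepsilon U + (C/\varepsilon^s) W$ holds with $C = s = 1$. By the preceding theorem, $E^\infty$ therefore satisfies $(\Omega)$. For the remaining claims: the inclusion $V \subseteq \varepsilon U + (C/\varepsilon^s)W$ of $(\Omega)$ in particular gives $V \subseteq \varepsilon U + RW$ with $R = C/\varepsilon^s$, and by \cite[Lemma 26.14]{M-V} this weaker property characterizes quasinormability among Fr\'echet spaces, so $E^\infty$ is quasinormable; finally \cite[Cor.\ 26.19]{M-V} says that every quasinormable Fr\'echet space is distinguished, which yields the last assertion.

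I do not expect a genuine obstacle here, since all of the analytic substance is already contained in the preceding theorem --- itself the ``careful inspection'' refinement of the proof of Theorem~\ref{main}, in which the $O(r)$ estimate of Proposition~\ref{controlled-approx} and the negative power of $r$ produced by Lemma~\ref{switch-conv}$(i)$ are tracked as explicit powers of $\varepsilon$. The only point worth not glossing over is that $E^\infty$ really is a Fr\'echet space: it is metrizable because $\csn(E)$ is generated by a single norm and $\N$ is countable, and it is complete because $e \mapsto \gamma_e$ is a topological embedding of $E^\infty$ onto a closed subspace of $C^\infty(\R^d;E)$, as noted at the end of Section~\ref{sect-prelim}.
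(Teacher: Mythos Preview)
Your proposal is correct and follows essentially the same approach as the paper: the paper's proof is the single observation that every Banach space satisfies $(\Omega)$, whence the preceding theorem applies, with the ``in particular'' assertions implicit from the discussion earlier in the section. Your write-up simply unpacks these steps---verifying local equicontinuity via barrelledness, checking $(\Omega)$ for normed spaces directly, and citing \cite[Lemma 26.14 and Cor.\ 26.19]{M-V} explicitly---but the argument is identical in substance.
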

Finally, we show that the converse of Theorem \ref{main} does not hold in general. We need some preparation. Let $X$ be a completely regular Hausdorff topological space and let $V$ be a (positive continuous) Nachbin family on $X$ \cite{B-M-S}, i.e., a family consisting of positive continuous functions on $X$ such that for all $v_1, v_2 \in V$ and $\lambda > 0$ there is $w \in V$ such that $\lambda \max\{v_1,v_2\} \leq w$. The associated weighted Nachbin space $VC_0(X)$ consists of all $f \in C(X)$ such that $vf$ vanishes at infinity for all $v \in V$; endowed with the topology generated by the system of seminorms $\{ p_v \, | \, v \in V \}$, where
$$
p_v(f) := \sup_{t \in X} v(t) |f(t)|, \qquad f \in VC_0(X), v \in V,
$$
it becomes a complete lcHs. In particular, $VC_0(X)$ satisfies $(cc)$. Bastin and Ernst \cite{B-E}  characterized the quasinormability of $VC_0(X)$ in the following way; see also \cite{B-M} for countable Nachbin families.

\begin{proposition}\cite[Prop.\ 2]{B-E} \label{Bastin}
Let $X$ be a completely regular Hausdorff topological space and let $V$ be a Nachbin family on $X$. Then, $VC_0(X)$ is quasinormable if and only if $V$ is regularly increasing, i.e., 
\begin{gather*}
\qquad \forall v \in V \, \exists u \in V \mbox { with } u \geq v \, \forall \varepsilon > 0 \, \forall w \in V \mbox { with } w \geq u \, \exists \delta > 0 \, \forall t \in X \,: \, 
\\ v(t) \geq \varepsilon u(t) \Longrightarrow u(t) \geq \delta w(t). 
\end{gather*}
\end{proposition}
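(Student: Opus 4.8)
The plan is to prove the two implications separately, using throughout two standard facts about $VC_0(X)$: its topology is generated by the \emph{directed} system $\{p_v : v \in V\}$, and a subset $B$ is bounded exactly when $\sup_{g \in B} p_w(g) < \infty$ for every $w \in V$ (equivalently, there are constants $C_w$ with $|g(t)| \le C_w/w(t)$ for all $g \in B$ and $t \in X$). Since $V$ is directed, in the definition of quasinormability one may replace the two continuous seminorms by seminorms of the form $p_v$, $p_u$ with $v,u \in V$, and, after enlarging $u$ within $V$ (which only shrinks $\{p_u \le 1\}$), one may also assume $u \ge v$. I would record this reduction first, so that both directions become assertions purely about the weights.

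\textbf{Sufficiency.} Suppose $V$ is regularly increasing. Given $v$, choose $u \ge v$ as in that condition. For $\varepsilon > 0$, form the continuous, $[0,1]$-valued function
$$
\psi := \max\bigl\{0,\ \min\{1,\ v/(\varepsilon u) - 1\}\bigr\}
$$
on $X$, which vanishes on $\{v \le \varepsilon u\}$ and equals $1$ on $\{v \ge 2\varepsilon u\}$. I would take $B := \{\psi f : f \in VC_0(X),\ p_u(f) \le 1\}$ and verify: (i) $B \subseteq VC_0(X)$, since $|\psi f| \le |f|$ pointwise; (ii) $B$ is bounded, because for $w \ge u$ the regularly increasing property gives $\delta > 0$ with $u \ge \delta w$ on $\{v \ge \varepsilon u\} \supseteq \{\psi > 0\}$, so $p_w(\psi f) \le \sup_{\{\psi>0\}} w/u \le 1/\delta$ whenever $p_u(f)\le 1$, and a general $w$ is handled by dominating $w$ and $u$ by a common member of $V$; (iii) if $p_u(f) \le 1$, then $g := \psi f \in B$ satisfies $p_v(f-g) = \sup_t v(t)(1-\psi(t))|f(t)| \le 2\varepsilon$, because $\psi(t) < 1$ forces $v(t) < 2\varepsilon u(t)$ while $|f(t)| \le 1/u(t)$. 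Applying this with $\varepsilon/2$ in place of $\varepsilon$ gives the quasinormability estimate for $(p_v,p_u)$.

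\textbf{Necessity.} Suppose $VC_0(X)$ is quasinormable. Given $v$, let $u \ge v$ be the weight quasinormability attaches to $p_v$. Fix $\varepsilon > 0$ and $w \ge u$; quasinormability applied with $\varepsilon/4$ yields a bounded set $B$, and with $C := \sup_{g\in B} p_w(g) < \infty$ one has $|g(t)| \le C/w(t)$ for all $g \in B$, $t \in X$. I claim $\delta := \varepsilon/(2C)$ realises the regularly increasing condition, that is, $v(t_0) \ge \varepsilon u(t_0)$ implies $u(t_0) \ge \delta w(t_0)$. Fix such a $t_0$; using complete regularity of $X$, choose a continuous $f$ with $|f| \le 1/u$ on $X$ and $f(t_0) = 1/u(t_0)$, arranged so that $f \in VC_0(X)$. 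Then $p_u(f) \le 1$, so there is $g \in B$ with $p_v(f-g) \le \varepsilon/4$; evaluating at $t_0$, $v(t_0)|g(t_0)| \ge v(t_0)/u(t_0) - \varepsilon/4 \ge \varepsilon - \varepsilon/4 \ge \varepsilon/2$, hence $|g(t_0)| \ge \varepsilon/(2v(t_0))$. Together with $|g(t_0)| \le C/w(t_0)$ and $v(t_0) \le u(t_0)$ this gives $w(t_0) \le (2C/\varepsilon)v(t_0) \le (2C/\varepsilon)u(t_0)$, i.e.\ $u(t_0) \ge \delta w(t_0)$.

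\textbf{The hard part.} The one genuinely delicate step is the peak-function construction in the necessity argument: I need, for each $t_0 \in X$, a continuous $f$ with $|f| \le 1/u$, with $f(t_0)$ of order $1/u(t_0)$, and --- crucially --- with $wf$ \emph{vanishing at infinity for every} $w \in V$. Complete regularity immediately supplies $\phi \in C(X,[0,1])$ with $\phi(t_0) = 1$, and $f := \phi/u$ satisfies the first two requirements, but membership in $VC_0(X)$ forces $\phi$ to have suitably small (e.g.\ relatively compact) support; this is where the topology of $X$ and the precise meaning of \emph{vanishing at infinity} must be invoked carefully, and it is the place I would expect to spend the most effort. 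Everything else --- the sufficiency direction and the quantitative bookkeeping --- is routine; I would also note that if one can only arrange $|f(t_0)| \ge \tfrac12\, u(t_0)^{-1}$ rather than equality, the same estimates survive after shrinking the bookkeeping constant.
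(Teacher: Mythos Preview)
The paper does not prove this proposition at all: it is quoted verbatim from Bastin and Ernst \cite[Prop.~2]{B-E} and used as a black box to build Example~\ref{neg-examle}. There is therefore no ``paper's own proof'' to compare against.

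That said, your argument is the standard one and is essentially correct. The sufficiency direction is clean; your cutoff $\psi$ does exactly what is needed, and the verification that $B$ is bounded in $VC_0(X)$ is right (note that $\psi f \in VC_0(X)$ follows from $|w\psi f| \le |wf|$, not merely from $|\psi f| \le |f|$, but this is implicit in what you wrote). In the necessity direction your bookkeeping is fine, and you have correctly isolated the only nontrivial point: producing, for an arbitrary $t_0 \in X$, a function $f \in VC_0(X)$ with $u(t_0)|f(t_0)|$ bounded below and $p_u(f) \le 1$. In the Nachbin-space setting ``$vf$ vanishes at infinity'' means that $\{t : v(t)|f(t)| \ge \eta\}$ is relatively compact for every $\eta > 0$, so a bare completely-regular separation $\phi$ will not do; one needs either an auxiliary element of $VC_0(X)$ that does not vanish at $t_0$ to multiply against, or an additional hypothesis ensuring enough compactly-supported continuous functions. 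This is exactly where Bastin and Ernst invoke the standing assumptions of the Nachbin framework, and your instinct to flag it is right.
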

We now give an example of a lcHs $E$ and a locally equicontinuous $C_0$-group on $E$ such that $E^\infty$ is quasinormable but $E$ is not; we were inspired by \cite[Example 3.1]{A-B-R}.
\begin{example}\label{neg-examle}
Let $X$ be a non-compact normal Hausdorff topological space and let $V$ be a  Nachbin family on $X$ that is not regularly increasing. Consider a positive unbounded continuous function $g$ on $X$; such a function can be constructed by using the non-compactness of $X$ and Tietze's extension theorem. One can readily check that the family of continuous linear operators $(T_x)_{x \in \R}$ given by
$$
T_x : VC_0(X) \rightarrow VC_0(X) : f \rightarrow e^{ixg}f, \qquad x \in \R,
$$
is a locally equicontinuous $C_0$-group on $E = VC_0(X)$ with $E^\infty = WC_0(X)$, where $W = \{ g^n v \, | \, n \in \N, v \in V\}$. Notice that $W$ is a regularly increasing Nachbin family. Hence, Proposition \ref{Bastin} yields that $E^\infty$ is quasinormable but $E$ is not.
\end{example}
\section{Translation-invariant Banach spaces of distributions}\label{TIB} 
In this final section, we discuss the linear topological properties of the translation-invariant Fr\'echet spaces of type $\mathcal{D}_E$. 
From now on, $T_x$, $x \in \R^d$, will always denote the translation operator on the space $\mathcal{D}'(\R^d)$ of distributions, i.e., $T_x f = f( \, \cdot  + x)$, $f \in \mathcal{D}'(\R^d)$. Furthermore,  the symbol ``$\hookrightarrow$" stands for dense continuous inclusion. 

We start by introducing translation-invariant Banach spaces of distributions; our definition is slightly more general than the one given in \cite{D-Pi-V} (see Remark \ref{general-def} below). 
\begin{definition}\label{Def-1}
A Banach space $E$ is said to be a translation-invariant Banach space of distributions (TIBD) if the following two conditions are satisfied
\begin{itemize}
\item[$(i)$] $\mathcal{D}(\R^d) \hookrightarrow E \hookrightarrow \mathcal{D}'(\R^d)$.
\item[$(ii)$] $T_x(E) \subseteq E$ for all $x \in \R^d$.
\end{itemize}
A TIBD $E$ is called solid \cite[p.\ 311]{F-G} if  $E \subset L^1_{\loc}(\R^d)$ with continuous inclusion and 
$$
\forall f \in E \, \forall g \in L^1_{\loc}(\R^d) \, : \; |g| \leq |f| \mbox{ a.e.} \Longrightarrow g \in E \mbox{ and } \|g\|_E \leq \| f \|_E.
$$
\end{definition}
Let $E$ be a TIBD. Our first goal is to show that $(T_x)_{x \in \R^d}$ is a $C_0$-group on $E$. Observe that $T_x: E \rightarrow E$, $x \in \R^d$, is continuous, as follows from the closed graph theorem and the continuity of $T_x: \mathcal{D}'(\R^d) \rightarrow \mathcal{D}'(\R^d)$. Next, it is clear that $T_0 = \operatorname{id}$ and that $T_{x + y} = T_{x} \circ T_{y}$ for all $x,y \in \R^d$. We now show that 
\begin{equation}
\lim_{x \rightarrow 0}T_xe = e, \qquad e \in E.
\label{transl-C0}
\end{equation}
The weight function of the translation group on $E$ is defined  as
$$
\omega_E(x) := \| T_{-x} \|_{\mathcal{L}(E)}, \qquad x \in \R^d.
$$
The function $\omega_E$ is measurable (as $E$ is separable, which follows from $\mathcal{D}(\R^d) \hookrightarrow E$) and submultiplicative. These two properties imply that $\omega_E$ is exponentially bounded \cite[16.2.6] {Kuczma}, whence \eqref{transl-C0}  follows from the fact that $\mathcal{D}(\R^d)$ is dense in $E$. Summarizing, we have proven the next result.
\begin{proposition}\label{Czero}
Let $E$ be a TIBD. Then, $(T_x)_{x \in \R^d}$ is a $C_0$-group on $E$. 
\end{proposition}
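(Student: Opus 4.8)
The plan is to verify the three defining axioms of a $C_0$-group, with essentially all the work concentrated in the strong continuity at the origin, axiom $(iii)$. The continuity of each individual operator $T_x \colon E \to E$ I would obtain from the closed graph theorem: $E$ is a Banach space, the inclusion $E \hookrightarrow \mathcal{D}'(\R^d)$ is continuous, and $T_x$ is continuous on $\mathcal{D}'(\R^d)$; hence if $e_k \to e$ and $T_x e_k \to f$ in $E$, the same convergences hold in $\mathcal{D}'(\R^d)$, which forces $f = T_x e$, so the graph of $T_x$ is closed. Axioms $(i)$ and $(ii)$, i.e.\ $T_0 = \operatorname{id}$ and $T_{x+y} = T_x \circ T_y$, are then immediate, being the restrictions to $E$ of the (trivial) corresponding identities for translations on $\mathcal{D}'(\R^d)$.

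For axiom $(iii)$ I would run the standard density argument, for which one needs an a priori local bound on the operator norms. Set $\omega_E(x) := \|T_{-x}\|_{\mathcal{L}(E)}$. From $T_{-(x+y)} = T_{-x} \circ T_{-y}$ one reads off submultiplicativity, $\omega_E(x+y) \le \omega_E(x)\,\omega_E(y)$. Next I would check that $\omega_E$ is Lebesgue measurable: since $\mathcal{D}(\R^d) \hookrightarrow E$ densely, $E$ is separable, and this lets one realise $\omega_E$ as a countable supremum of measurable functions of $x$, using that $x \mapsto \langle T_{-x} e, \varphi \rangle = \langle e, T_x \varphi \rangle$ is continuous for each $e \in E$ and $\varphi \in \mathcal{D}(\R^d)$. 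A classical theorem on submultiplicative measurable functions (as in \cite[16.2.6]{Kuczma}) then yields that $\omega_E$ is bounded on compact sets, in fact exponentially bounded; in particular $M := \sup_{|x| \le 1} \omega_E(x) < \infty$.

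With this bound available, axiom $(iii)$ follows quickly. Given $e \in E$ and $\varepsilon > 0$, choose $\varphi \in \mathcal{D}(\R^d)$ with $\|e - \varphi\|_E \le \varepsilon$ (possible by density), and estimate, for $|x| \le 1$,
\[
\|T_x e - e\|_E \le \omega_E(-x)\,\|e - \varphi\|_E + \|T_x \varphi - \varphi\|_E + \|\varphi - e\|_E \le (M+1)\varepsilon + \|T_x \varphi - \varphi\|_E,
\]
and the middle term tends to $0$ as $x \to 0$, since $T_x \varphi \to \varphi$ in $\mathcal{D}(\R^d)$ and the inclusion $\mathcal{D}(\R^d) \hookrightarrow E$ is continuous. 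Letting $x \to 0$ and then $\varepsilon \to 0$ gives $\lim_{x \to 0} T_x e = e$, which completes the verification.

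I expect the argument to be essentially routine. The one genuinely non-formal ingredient — and the step I would be most careful about — is the passage \emph{submultiplicative $+$ measurable $\Rightarrow$ locally bounded}, together with the (mildly technical) verification that separability of $E$ really does force $\omega_E$ to be measurable. Everything else reduces to a closed-graph argument and a one-line density estimate.
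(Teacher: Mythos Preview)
Your proposal is correct and follows essentially the same approach as the paper: continuity of each $T_x$ via the closed graph theorem, the algebraic axioms for free, and strong continuity by combining submultiplicativity and measurability of $\omega_E$ (the latter via separability of $E$, inherited from the dense embedding of $\mathcal{D}(\R^d)$) with the classical result \cite[16.2.6]{Kuczma} to get local boundedness, and then a density argument. The paper records these steps more tersely and omits the explicit $\varepsilon$-splitting you wrote out, but the logic is identical.
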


\begin{remark}\label{general-def}
In \cite{D-Pi-V}, a  Banach space $E$ is called a TIBD if $E$ satisfies $(i)$, $(ii)$ and the additional property that the weight function $\omega_E$ is polynomially bounded. In the present framework, we can only conclude that $\omega_E$ is exponentially bounded. However, all results from \cite{D-Pi-V} remain valid upon replacing the spaces $\mathcal{S}(\R^d)$ and $\mathcal{S}'(\R^d)$ by $\mathcal{K}_1(\R^d)$ and $\mathcal{K}'_1(\R^d)$, respectively, where $\mathcal{K}_1(\R^d)$ stands for the space of exponentially decreasing smooth functions and $\mathcal{K}'_1(\R^d)$ stands for the space of exponentially bounded distributions \cite{Hasumi}.
 \end{remark}
\begin{definition}\label{Def-2}
Let $E$ be a TIBD. We define $\mathcal{D}_E$ as the space of all $e \in E$ such that $\partial^\alpha e\in E$ for all $\alpha \in \N^d$ (the derivatives should be interpreted in the sense of distributions). We endow $\mathcal{D}_E$ with the system of norms $\{ \| \, \cdot \, \|_{E,n} \, | \, n \in \N \}$, where 
$$
\| e \|_{E,n} := \max_{|\alpha| \leq n} \| \partial^\alpha e \|_E, \qquad e \in \mathcal{D}_E, n \in \N.
$$
Hence, $\mathcal{D}_E$  becomes a Fr\'echet space. 
\end{definition}

Let $E$ be a TIBD. In \cite[Prop.\ 7]{D-Pi-V}, it is shown that all elements $e$ of  $\mathcal{D}_E$ are smooth functions on $\R^d$ that satisfy
$$
\lim_{x \to \infty} \frac{\partial^\alpha e(x) }{\omega_E(-x)} = 0, \qquad \alpha \in \N^d.
$$
\begin{example}
The main examples of TIBD are the Lebesgue spaces $L^p(\R^d)$ ($1\leq p < \infty$), the space $C_0(\R^d)$ of continuous functions vanishing at infinity and their weighted variants. More precisely, let $\eta$ be a positive measurable function on $\R^d$ such that 
\begin{equation}
\sup_{t \in \R^d} \frac{\eta(\, \cdot + t )}{\eta(t)} \in L^\infty_{\operatorname{loc}}(\R^d).
\label{condition-eta}
\end{equation}
We define $L^p_\eta(\R^d)$ ($1 \leq p < \infty$) as the Banach space of all measurable functions $f$ on $\R^d$ such that $\| f \eta \|_{L^p} < \infty$. On the other hand, $C_{\eta,0}(\R^d)$ stands for the space of all continuous functions $f$ on $\R^d$ such that
$\lim_{x \to \infty} f(x)/\eta(x) = 0$; endowed with the norm  $\| \, \cdot / \eta \|_{L^\infty}$ it becomes a Banach space. The spaces $L^p_\eta(\R^d)$ and $C_{\eta,0}(\R^d)$ are clearly solid TIBD and we have that
$$
\mathcal{D}_{L^p_{\eta}} = \{ \varphi \in C^\infty(\R^d) \, | \, \partial^\alpha \varphi \in L^p_\eta(\R^d) \mbox{ for all } \alpha \in \N^d \}
$$
and
$$ 
\mathcal{D}_{C_{\eta,0}} = \dot{\mathcal{B}}_{\eta} =  \{ \varphi \in C^\infty(\R^d) \, | \, \partial^\alpha \varphi \in C_{\eta,0}(\R^d) \mbox{ for all } \alpha \in \N^d \}.
$$
 For $\eta \equiv 1$, these spaces were already considered by Schwartz \cite{Schwartz}.
 
 We would like to point out that $L^2(\R^d) \widehat{\otimes}_\pi  L^2(\R^d)$ and $L^2(\R^d) \widehat{\otimes}_\varepsilon  L^2(\R^d)$ are examples of TIBD consisting of locally integrable functions on $\R^{2d}$ that are different from all mixed Lebesgue spaces $L^{p,q}(\R^{2d})$ ($1\leq p,q \leq \infty$) \cite[Remark 3.11]{D-P-P-V}. In fact, the space $L^2(\R^d) \widehat{\otimes}_\pi  L^2(\R^d)$ is not even solid \cite[Remark 3.10]{D-P-P-V}.
\end{example}

\begin{theorem}\label{question}
Let $E$ be a TIBD. Then, $\mathcal{D}_E$ satisfies $(\Omega)$. In particular, $\mathcal{D}_E$ is quasinormable and distinguished. 
\end{theorem}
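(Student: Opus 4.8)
The plan is to show that $\mathcal{D}_E$ coincides, as a locally convex space, with the space $E^\infty$ of smooth vectors for the translation group $(T_x)_{x \in \R^d}$ on $E$, and then to quote Corollary \ref{Banach-omega}. By Proposition \ref{Czero}, $(T_x)_{x \in \R^d}$ is a $C_0$-group on the Banach space $E$; since $E$ is Banach it is barrelled (so this group is locally equicontinuous) and complete (so it satisfies $(cc)$), hence all the results of Sections \ref{sect-conv}--\ref{sect-quasi} are available. Once the identification $E^\infty = \mathcal{D}_E$ is established, Corollary \ref{Banach-omega}, applied to the Banach space $E$ and the translation group, yields at once that $\mathcal{D}_E$ satisfies $(\Omega)$ and is quasinormable and distinguished.

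To prove $E^\infty \subseteq \mathcal{D}_E$, let $e \in E^\infty$, so that $\gamma_e \in C^\infty(\R^d;E)$. Composing with the continuous injection $\iota \colon E \hookrightarrow \mathcal{D}'(\R^d)$, which intertwines the two translation groups, gives $\iota \circ \gamma_e \in C^\infty(\R^d;\mathcal{D}'(\R^d))$ with $\partial^\alpha(\iota \circ \gamma_e)(0) = \iota(e^{(\alpha)})$ for all $\alpha$. On the other hand, for every $f \in \mathcal{D}'(\R^d)$ the orbit $x \mapsto f(\,\cdot\, + x)$ is smooth as a $\mathcal{D}'(\R^d)$-valued map with $x$-derivatives $(\partial^\alpha f)(\,\cdot\, + x)$, a routine computation testing against $\varphi \in \mathcal{D}(\R^d)$. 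Evaluating at $x = 0$ identifies $\iota(e^{(\alpha)})$ with the distributional derivative $\partial^\alpha e$; since $e^{(\alpha)} \in E$, this shows $\partial^\alpha e \in E$ for all $\alpha$, i.e.\ $e \in \mathcal{D}_E$, and moreover $\|e\|_{E,n} = \max_{|\alpha| \leq n}\|e^{(\alpha)}\|_E$, so that $E^\infty$ and $\mathcal{D}_E$ carry the same seminorms on the common underlying set.

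The main work, and the step I expect to be the crux, is the reverse inclusion $\mathcal{D}_E \subseteq E^\infty$. Let $e \in \mathcal{D}_E$. I would first show that $\gamma_e$ is partially differentiable in each coordinate direction with $\partial_j \gamma_e = \gamma_{\partial_j e}$. Fix $x \in \R^d$ and $1 \leq j \leq d$; since $\partial_j e \in E$, the map $s \mapsto \gamma_{\partial_j e}(x + se_j)$ is continuous into the (complete) space $E$, so $F(t) := \int_0^t \gamma_{\partial_j e}(x + se_j)\,\mathrm{d}s \in E$ is well defined and $C^1$ with $F'(t) = \gamma_{\partial_j e}(x + te_j)$. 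Applying $\iota$ (which commutes with the integral), using the fundamental theorem of calculus in $\mathcal{D}'(\R^d)$ for the smooth orbit $s \mapsto e(\,\cdot\, + x + se_j)$ — whose derivative is $(\partial_j e)(\,\cdot\, + x + se_j)$ — and invoking the injectivity of $\iota$ gives $F(t) = \gamma_e(x + te_j) - \gamma_e(x)$. Hence $t \mapsto \gamma_e(x + te_j)$ is $C^1$ with derivative $\gamma_{\partial_j e}(x + te_j)$; as the maps $\gamma_{\partial_j e}$ are continuous and a function with continuous partial derivatives is $C^1$, we get $\gamma_e \in C^1(\R^d;E)$ and $e \in E^1$ with $e^{(e_j)} = \partial_j e$. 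Since $\partial_j e \in \mathcal{D}_E$ as well, iterating the argument yields $\gamma_e \in C^\infty(\R^d;E)$, i.e.\ $e \in E^\infty$; continuity of the inclusion $\mathcal{D}_E \hookrightarrow E^\infty$ is then immediate from the matching of seminorms. No single step is deep — the delicate point is merely to keep straight that the vector-valued derivatives produced by the group structure really are the distributional derivatives, which is exactly what the passage through $\iota$ and $\mathcal{D}'(\R^d)$ accomplishes.
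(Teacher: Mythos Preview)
Your proposal is correct and follows the same global strategy as the paper: identify $\mathcal{D}_E$ with $E^\infty$ for the translation group and then invoke Corollary~\ref{Banach-omega}. The proof of the inclusion $E^\infty \subseteq \mathcal{D}_E$ is also essentially the same as the paper's, both arguments passing through $\mathcal{D}'(\R^d)$ to identify $e^{(\alpha)}$ with the distributional derivative $\partial^\alpha e$.

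The one genuine difference lies in the proof of the harder inclusion $\mathcal{D}_E \subseteq E^\infty$. The paper appeals to a black-box result of Schwartz \cite[Appendice Lemme II]{Schwartz54}: a function into a (quasi-)complete lcHs is $C^\infty$ as soon as it is scalarly $C^\infty$. It then verifies scalar smoothness of $\gamma_e$ by showing, via $E$-valued integration, that $\partial^\alpha\langle e',\gamma_e(\cdot)\rangle = \langle e',\gamma_{\partial^\alpha e}(\cdot)\rangle$ in $\mathcal{D}'(\R^d)$. Your argument is more elementary and self-contained: you integrate the continuous $E$-valued map $s \mapsto \gamma_{\partial_j e}(x+se_j)$, compare the result with the fundamental theorem of calculus applied to the \emph{smooth} orbit in $\mathcal{D}'(\R^d)$, and use injectivity of $E \hookrightarrow \mathcal{D}'(\R^d)$ to conclude that $\gamma_e$ itself is $C^1$; iteration then gives $C^\infty$. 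Your route avoids quoting Schwartz's lemma at the price of a short direct computation; the paper's route is shorter to write but hides the analysis inside that reference. Both are perfectly valid.
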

In view of Corollary \ref{Banach-omega}, Theorem \ref{question} is a direct consequence of the following lemma.
\begin{lemma}\label{smooth-TIB}
Let $E$ be a TIBD and consider the $C_0$-group $(T_x)_{x \in \R^d}$ on $E$ (cf.\ Proposition \ref{Czero}). Then, $E^\infty = \mathcal{D}_E$ and   $e^{(\alpha)} =  \partial^\alpha e$ for all $e \in\mathcal{D}_E$ and $\alpha \in \N^d$. 
\end{lemma}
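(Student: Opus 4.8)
The plan is to establish the set-theoretic equality $E^\infty = \mathcal{D}_E$ together with the identification of derivatives in two inclusions, and then observe that the topologies match because the defining seminorms coincide. First I would show $E^\infty \subseteq \mathcal{D}_E$ with $e^{(\alpha)} = \partial^\alpha e$. Suppose $e \in E^\infty$, so that $\gamma_e \in C^\infty(\R^d; E)$. Since $E \hookrightarrow \mathcal{D}'(\R^d)$, the orbit $\gamma_e$ is also a smooth curve in $\mathcal{D}'(\R^d)$, and there it is the curve $x \mapsto T_x e = e(\,\cdot + x)$. A direct computation (testing against $\psi \in \mathcal{D}(\R^d)$) shows that the derivative in $\mathcal{D}'(\R^d)$ of $x \mapsto e(\,\cdot + x)$ at $0$ in the $e_j$-direction is $\partial_j e$, where the latter is the distributional derivative; more generally $\partial^\alpha_x (T_x e)|_{x=0} = \partial^\alpha e$. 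Since $E^\infty$-differentiation is compatible with $\mathcal{D}'$-differentiation via the continuous inclusion, we get $e^{(\alpha)} = \partial^\alpha e$ as elements of $E \hookrightarrow \mathcal{D}'(\R^d)$, and in particular $\partial^\alpha e \in E$ for all $\alpha$, i.e.\ $e \in \mathcal{D}_E$.

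For the reverse inclusion $\mathcal{D}_E \subseteq E^\infty$, take $e \in \mathcal{D}_E$, so $\partial^\alpha e \in E$ for all $\alpha$. I would first treat the case $|\alpha| = 1$ and then induct. The goal is to show the curve $x \mapsto T_x e$ is differentiable in $E$ with derivative $x \mapsto T_x(\partial_j e)$. The natural tool is the fundamental theorem of calculus in the form
\begin{equation*}
T_{he_j} e - e = \int_0^h T_{te_j}(\partial_j e)\, \dt,
\end{equation*}
which one checks first in $\mathcal{D}'(\R^d)$ (testing against $\psi \in \mathcal{D}$, where it is the classical FTC) and then notes that since $\partial_j e \in E$ and $t \mapsto T_{te_j}(\partial_j e)$ is continuous into $E$ (by Proposition \ref{Czero}, as $(T_x)$ is a $C_0$-group), the integral on the right is a well-defined $E$-valued integral (here $E$ is a Banach space, hence satisfies $(cc)$), and the identity holds in $E$ because both sides agree as distributions. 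Dividing by $h$ and using continuity of $t \mapsto T_{te_j}(\partial_j e)$ at $t=0$ gives $\lim_{h\to 0} h^{-1}(T_{he_j}e - e) = \partial_j e$ in $E$, so $\partial_j \gamma_e(0) = \partial_j e$, and more generally $\partial_j \gamma_e(x) = T_x(\partial_j e) = \gamma_{\partial_j e}(x)$ for all $x$, which is continuous in $x$. Thus $e \in E^1$ with $e^{(e_j)} = \partial_j e$; iterating, $e \in E^\infty$ with $e^{(\alpha)} = \partial^\alpha e$.

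Finally, with the two inclusions giving $E^\infty = \mathcal{D}_E$ as vector spaces and $e^{(\alpha)} = \partial^\alpha e$, the topologies coincide: the Banach norm on $E$ is the only continuous seminorm needed (up to equivalence), so $p_n(e) = \max_{|\alpha|\leq n} \|e^{(\alpha)}\|_E = \max_{|\alpha|\leq n}\|\partial^\alpha e\|_E = \|e\|_{E,n}$, and the systems $\{\|\cdot\|_{E,n} : n \in \N\}$ defining $\mathcal{D}_E$ and $\{p_n : n \in \N\}$ (recalling $\csn(E)$ is generated by $\|\cdot\|_E$) generate the same Fréchet topology. I expect the main obstacle to be the careful justification that the FTC identity, initially valid only in $\mathcal{D}'(\R^d)$, genuinely holds in $E$ — specifically that the $E$-valued Bochner integral of $t \mapsto T_{te_j}(\partial_j e)$ exists and represents the same distribution as the left-hand side; this rests on the continuity of the orbit map into $E$ (Proposition \ref{Czero}) and the continuity of the inclusion $E \hookrightarrow \mathcal{D}'(\R^d)$, which lets us transfer the identity. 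Everything else is routine bookkeeping with the compatibility of differentiation under continuous linear inclusions.
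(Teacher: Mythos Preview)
Your proof is correct. The first inclusion $E^\infty \subseteq \mathcal{D}_E$ is argued exactly as in the paper: pass to $\mathcal{D}'(\R^d)$ via the continuous inclusion, compute the derivative of the orbit there by testing against $\psi \in \mathcal{D}(\R^d)$, and identify it with the distributional derivative.

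For the reverse inclusion $\mathcal{D}_E \subseteq E^\infty$, however, you take a genuinely different route. The paper invokes Schwartz's weak-to-strong differentiability criterion \cite[Appendice Lemme~II]{Schwartz54}: it shows that the orbit $\gamma_e$ is \emph{scalarly} smooth by proving $\partial^\alpha \langle e',\gamma_e\rangle = \langle e',\gamma_{\partial^\alpha e}\rangle$ in $\mathcal{D}'(\R^d)$ via an integration-by-parts identity for $E$-valued integrals, and then appeals to Schwartz's lemma to upgrade to genuine smoothness in $E$. Your argument is instead the classical semigroup-theoretic one: establish the fundamental-theorem-of-calculus identity $T_{he_j}e - e = \int_0^h T_{te_j}(\partial_j e)\,\dt$ first in $\mathcal{D}'(\R^d)$, then lift it to $E$ using that the integrand is a continuous $E$-valued curve (by Proposition~\ref{Czero}) and that $E\hookrightarrow\mathcal{D}'(\R^d)$ is injective, and finally divide by $h$ and let $h\to 0$. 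Your approach is more elementary in that it avoids the external reference to Schwartz's lemma and stays entirely within standard Banach-space integration; the paper's approach has the modest advantage of handling all orders $\alpha$ in one stroke rather than inducting on $|\alpha|=1$. Both are perfectly sound, and the point you single out as the main obstacle---transferring the FTC identity from $\mathcal{D}'(\R^d)$ to $E$---is indeed the only place requiring care, and you handle it correctly.
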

\begin{proof}
We first show that $E^\infty \subseteq \mathcal{D}_E$  and $e^{(\alpha)} = \partial^\alpha e$ for all $e \in E^\infty$ and $\alpha \in \N^d$. It suffices to prove that, for all $e \in E^1$ and $1 \leq j \leq d$, $\partial_j e$ belongs to $E$ and $\gamma_{\partial_je} = \partial_j\gamma_e$, the result then follows by induction. For all $\varphi \in \mathcal{D}(\R^d)$ it holds that
$$
\partial_j\varphi = \lim_{h \to 0} \frac{T_{he_j}\varphi - \varphi}{h} \qquad \mbox{in } \mathcal{D}(\R^d).
$$
Hence,
$$
\langle T_x\partial_je, \varphi \rangle = -  \lim_{h \to 0} \left \langle T_xe,  \frac{T_{he_j}\varphi - \varphi}{h} \right \rangle = \lim_{h \to 0} \left \langle \frac{\gamma_e(x- he_j) - \gamma_e(x)}{-h}, \varphi \right \rangle = \langle  \partial_j\gamma_e(x), \varphi \rangle,
$$
for all $x \in \R^d$. The result now follows by setting $x = 0$ in the above equality. Next, we prove that  $\mathcal{D}_E \subseteq E^\infty$. Let $e \in \mathcal{D}_E$ be arbitrary.  By \cite[Appendice Lemme II]{Schwartz54} it is enough to show that $\gamma_e$ is scalarly smooth, i.e., $f_{e'} = \langle e', \gamma_e(\cdot) \rangle \in C^\infty(\R^d)$ for all $e' \in E'$. We claim that $\partial^\alpha f_{e'} = \langle e', \gamma_{\partial^\alpha e} (\cdot)\rangle$ in $\mathcal{D}'(\R^d)$ for all $\alpha \in \N^d$. This implies that $f_{e'}$ and all its derivatives  are continuous functions on $\R^d$, whence $f_{e'} \in C^\infty(\R^d)$. We now prove the claim. Let $\varphi \in \mathcal{D}(\R^d)$ be arbitrary. The equality
\begin{equation}
\int_{\R^d} \partial^\alpha\varphi(x)   T_xe  \dx  = (-1)^{|\alpha|} \int_{\R^d} \varphi(x)   T_x \partial^\alpha e  \dx
\label{cool}
\end{equation}
holds if we interpret the above integrals as $\mathcal{D}'(\R^d)$-valued integrals. Since both the functions $x \rightarrow \partial^\alpha\varphi(x)   T_xe$ and $x \rightarrow \varphi(x)   T_x \partial^\alpha e$ are compactly supported continuous $E$-valued functions, these integrals exist as $E$-valued integrals and we may conclude that the equality \eqref{cool} holds in $E$. Hence,
\begin{align*}
\langle \partial^{\alpha}f_{e'}, \varphi \rangle &= (-1)^{|\alpha|} \int_{\R^d} \partial^{\alpha}\varphi(x)  \langle e',\gamma_e(x) \rangle \dx\\ 
&=  (-1)^{|\alpha|} \left \langle e', \int_{\R^d} \partial^{\alpha}\varphi(x)   T_xe  \dx  \right \rangle \\  
&=   \left \langle e', \int_{\R^d} \varphi(x)   T_x \partial^{\alpha}e \dx  \right \rangle \\ 
&= \int_{\R^d} \varphi(x)  \langle e', \gamma_{\partial^{\alpha}e}(x) \rangle \dx.
\end{align*}
\end{proof}
To end this article, we address  the following problem.
\begin{problem} \label{problem-Montel} \cite[Remark 6]{D-Pi-V}  \emph{Do there exist TIBD $E$ such that $\mathcal{D}_E$ is Montel?}
\end{problem}
Schwartz already observed that the spaces $\mathcal{D}_{L^p}$ ($1 \leq p < \infty$) and $\dot{\mathcal{B}}$ are not Montel \cite[p.\ 200]{Schwartz}. Let $\eta$ be a positive measurable function on $\R^d$ satisfying \eqref{condition-eta}. Since $\mathcal{D}_{L^p_\eta} \cong \mathcal{D}_{L^p}$ ($1 \leq p < \infty$) and  $\dot{\mathcal{B}}_\eta \cong  \dot{\mathcal{B}}$ \cite[Remark 5]{D-Pi-V},  the spaces $\mathcal{D}_{L^p_\eta}$  and $\dot{\mathcal{B}}_\eta$ are also not Montel. Furthermore,  $\mathcal{D}_E$ is not Montel if $E$ is a TIBD such that $\omega_E$ is bounded on $\R^d$ \cite[Remark 6]{D-Pi-V}. Below we show that $\mathcal{D}_E$ is not Montel if $E$ is a solid TIBD. We  believe that  $\mathcal{D}_E$ is never Montel  but were unable to show this for general TIBD $E$. Observe that, by Theorem \ref{question}, it suffices to show that $\mathcal{D}_E$ is not Schwartz.
\begin{theorem}\label{non-montel-solid}
Let $E$ be  a solid TIBD. Then, $\mathcal{D}_E$ is not Montel.
\end{theorem}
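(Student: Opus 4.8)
The plan is to exhibit a bounded sequence in $\mathcal{D}_E$ with no convergent subsequence; since $\mathcal{D}_E$ is metrizable, this already shows that it is not Montel (and hence, in view of Theorem~\ref{question}, that it is not Schwartz). The sequence will consist of suitably normalized translates of a fixed bump function, and solidity will enter twice: once to keep the $\mathcal{D}_E$-norms of these translates uniformly bounded, and once to bound their mutual $\|\cdot\|_E$-distances from below.

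Concretely, I would fix $\varphi\in\mathcal{D}(\R^d)$ with $\varphi\geq 0$, $\varphi\equiv 1$ on $\overline{B(0,1)}$ and $\operatorname{supp}\varphi\subseteq B(0,2)$, together with an auxiliary $\chi\in\mathcal{D}(\R^d)$ satisfying $0\leq\chi\leq 1$, $\chi\equiv 1$ on $\overline{B(0,2)}$ and $\operatorname{supp}\chi\subseteq B(0,3)$, so that $|\partial^\alpha\varphi|\leq C_\alpha\chi$ pointwise with $C_\alpha:=\|\partial^\alpha\varphi\|_{L^\infty}$. Next pick $(x_k)_k\subset\R^d$ with $|x_k-x_j|\geq 5$ for $k\neq j$ (possible since $\R^d$ is unbounded) and set
$$
e_k:=\frac{T_{x_k}\varphi}{\|T_{x_k}\varphi\|_E}\in\mathcal{D}_E,
$$
which is well defined because $\mathcal{D}_E$ is translation invariant and $T_{x_k}\varphi\geq\mathbf{1}_{B(-x_k,1)}\neq 0$ in $\mathcal{D}'(\R^d)$, hence in $E$. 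To control $\|e_k\|_{E,m}$, I would cover $\operatorname{supp}\chi$ by finitely many unit balls $B(y_i,1)$, $i=1,\dots,N$, with $y_i$ fixed; then for each $k$,
$$
T_{x_k}\chi\leq\sum_{i=1}^N\mathbf{1}_{B(y_i-x_k,1)}=\sum_{i=1}^N T_{-y_i}\mathbf{1}_{B(-x_k,1)}\leq\sum_{i=1}^N T_{-y_i}(T_{x_k}\varphi)
$$
pointwise, so solidity, the triangle inequality and submultiplicativity of $\omega_E$ give $\|T_{x_k}\chi\|_E\leq C\|T_{x_k}\varphi\|_E$ with $C:=\sum_{i=1}^N\omega_E(y_i)$ independent of $k$. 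Since $\partial^\alpha e_k=T_{x_k}(\partial^\alpha\varphi)/\|T_{x_k}\varphi\|_E$ and $|T_{x_k}(\partial^\alpha\varphi)|\leq C_\alpha T_{x_k}\chi$, solidity then yields $\|e_k\|_{E,m}\leq C\max_{|\alpha|\leq m}C_\alpha$ for every $m\in\N$, i.e.\ $(e_k)_k$ is bounded in $\mathcal{D}_E$.

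For the non-compactness I would use that the closed balls $\overline{B(-x_k,2)}\supseteq\operatorname{supp}e_k$ are pairwise disjoint: for $k\neq j$ one has $|e_k|\leq|e_k-e_j|$ pointwise, whence $\|e_k-e_j\|_E\geq\|e_k\|_E=1$ by solidity. Hence no subsequence of $(e_k)_k$ is $\|\cdot\|_E$-Cauchy, and since $\|\cdot\|_E=\|\cdot\|_{E,0}$ is one of the norms defining $\mathcal{D}_E$, no subsequence converges in $\mathcal{D}_E$. A bounded sequence without a convergent subsequence prevents the Fr\'echet space $\mathcal{D}_E$ from being Montel, which finishes the argument.

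The steps involving the choice of $\varphi$, $\chi$ and the finite cover are routine. The one point requiring care is the uniform bound on $\|e_k\|_{E,m}$: when $\omega_E$ is unbounded, the unnormalized norms $\|T_{x_k}\varphi\|_E$ may tend to $0$ or to $\infty$, so a bare translation-invariance argument (which would suffice when $\omega_E$ is bounded) is not available, and one is forced to normalize and to compare $\|T_{x_k}\chi\|_E$ with $\|T_{x_k}\varphi\|_E$ via the covering/submultiplicativity estimate above.
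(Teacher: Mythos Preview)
Your argument is correct. The paper takes a more structural route: it associates to $E$ the solid Banach sequence space
\[
E_r(\Z^d)=\Bigl\{(a_j)_j\in\C^{\Z^d}\,:\,\sum_{j\in\Z^d}a_j\,T_{-j}\chi_{B(0,r)}\in E\Bigr\},
\]
shows it is infinite-dimensional and independent of $r\in(0,1/2)$, and then proves that $(a_j)_j\mapsto\sum_j a_j\,T_{-j}\varphi$ is a topological embedding of $E_r(\Z^d)$ into $\mathcal{D}_E$; since a Montel space has no infinite-dimensional normed subspace, the result follows. Your direct construction of a bounded sequence without convergent subsequence is more elementary and self-contained, while the paper's approach yields additional structural information (an explicit Banach subspace of $\mathcal{D}_E$ canonically attached to $E$). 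The key analytic ingredient is essentially the same in both proofs: your covering estimate comparing $\|T_{x_k}\chi\|_E$ with $\|T_{x_k}\varphi\|_E$ via solidity and finitely many fixed translates is precisely the mechanism the paper uses to show $E_r(\Z^d)=E_R(\Z^d)$ with equivalent norms.
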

The proof of Theorem \ref{non-montel-solid} is based on the well-known fact that all normed subspaces of a Montel lcHs are finite-dimensional.
Namely, following \cite[Def.\ 3.4]{F-G}, we associate to each  solid TIBD $E$ an infinite-dimensional Banach sequence space (extending the natural correspondence between $L^p$ and $l^p$, and $C_0$ and $c_0$) and then show that this sequence space is isomorphic to a  subspace of $\mathcal{D}_E$. We need some preparation.

We denote by $\chi_A$ the characteristic function of a set $A \subseteq \R^d$. Let $E$ be  a solid TIBD.  For $0 < r < 1/2$ we define $E_r(\Z^d)$ as the space of all  (multi-indexed) sequences $(a_j)_{j} \in \C^{\Z^d}$ such that
$$
\sum_{j \in \Z^d} a_j T_{-j}\chi_{B(0,r)} \in E.
$$
We endow $E_r(\Z^d)$ with the norm induced by $E$, that is,
$$
\| (a_j)_j \|_{E_r(\Z^d)} := \|  \sum_{j \in \Z^d} a_j T_{-j}\chi_{B(0,r)} \|_E, \qquad  (a_j)_j \in E_r(\Z^d).
$$
The solidness of $E$ implies that $E_r(\Z^d)$ is a solid sequence space, i.e.,
\begin{gather*}
\forall (a_j)_j \in E_r(\Z^d) \, \forall (b_j)_j \in \C^{\Z^d} \, : \; \\
|b_j| \leq |a_j| \mbox{ $\forall j \in \Z^d$} \Longrightarrow (b_j)_j \in E_r(\Z^d) \mbox{ and } \|(b_j)_j\|_{E_r(\Z^d)} \leq \|(a_j)_j\|_{E_r(\Z^d)}.
\end{gather*}
In the next lemma, we collect several useful facts about $E_r(\Z^d)$ (cf.\ \cite[Lemma 3.5]{F-G}).
\begin{lemma}\label{montel-lemma}
Let $E$ be  a solid TIBD and let $ 0 < r < 1/2$. Then,
\begin{itemize}
\item[$(i)$] $E_r(\Z^d)$ is a Banach space.
\item[$(ii)$] $E_r(\Z^d)$ is infinite-dimensional.
\item[$(iii)$] For all $f \in L^\infty(\R^d)$ with $\operatorname{supp} f \subseteq \overline{B}(0,r)$ and all $(a_j)_j \in E_r(\Z^d)$ it holds that
$$
 \sum_{j \in \Z^d} a_j T_{-j}f \in E \qquad \mbox{ and }  \qquad \|  \sum_{j \in \Z^d} a_j T_{-j}f \|_E \leq  \|f\|_{L^\infty} \|(a_j)_j\|_{E_r(\Z^d)}.
$$
\item[$(iv)$] For all $0 < R < 1/2$ it holds that $E_r(\Z^d) = E_{R}(\Z^d)$ with equivalent norms. 
\end{itemize}
\end{lemma}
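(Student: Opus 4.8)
The plan is to realise $E_r(\Z^d)$ as a closed subspace of $E$ and then deduce $(i)$--$(iv)$ from solidness of $E$ together with the elementary geometry of the balls $B(j,r)$, $j\in\Z^d$. The fact I use throughout is that, since $2r<1$, the closed balls $\overline{B}(j,r)$, $j\in\Z^d$, are pairwise disjoint, so for any $(a_j)_j\in\C^{\Z^d}$ the function $g_{(a)}:=\sum_j a_jT_{-j}\chi_{B(0,r)}=\sum_j a_j\chi_{B(j,r)}$ is well defined a.e.\ (it equals $a_j$ on $B(j,r)$ and $0$ off $\bigcup_j B(j,r)$), is locally bounded, and satisfies $|g_{(a)}|=\sum_j|a_j|\chi_{B(j,r)}$ a.e. I would first record that $\chi_{B(0,r)}\in E$: picking $\varphi\in\mathcal D(\R^d)$ with $0\le\chi_{B(0,r)}\le\varphi$ and using $\mathcal D(\R^d)\hookrightarrow E$ with solidness gives this, and then translation-invariance gives $\chi_{B(j,r)}=T_{-j}\chi_{B(0,r)}\in E$ for all $j$. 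By the definition of $\|\cdot\|_{E_r(\Z^d)}$, the map $\Phi\colon(a_j)_j\mapsto g_{(a)}$ is a linear isometry of $E_r(\Z^d)$ onto the subspace $\mathcal E_r:=\Phi(E_r(\Z^d))$ of $E$ (injective since $g_{(a)}=0$ in $E\subset L^1_{\loc}(\R^d)$ forces every $a_j=0$). For $(i)$ it then suffices that $\mathcal E_r$ be closed in $E$: if $g_{(a^{(n)})}\to h$ in $E$, then $g_{(a^{(n)})}\to h$ in $L^1_{\loc}(\R^d)$ by continuity of $E\subset L^1_{\loc}(\R^d)$; restricting to each $B(j,r)$, the constants $a^{(n)}_j$ converge in $L^1(B(j,r))$ to some $b_j$ (constants form a closed subspace), while vanishing of $g_{(a^{(n)})}$ off $\bigcup_j B(j,r)$ forces $h=0$ a.e.\ there; hence $h=g_{(b)}$ a.e., so $(b_j)_j\in E_r(\Z^d)$ and $a^{(n)}\to(b_j)_j$ in $E_r(\Z^d)$.

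Part $(iii)$ is the core estimate, and I would prove it before $(ii)$ and $(iv)$. Given $f\in L^\infty(\R^d)$ with $\operatorname{supp} f\subseteq\overline{B}(0,r)$ and $(a_j)_j\in E_r(\Z^d)$, the translates $T_{-j}f$ also have pairwise disjoint supports $\overline{B}(j,r)$, so $h:=\sum_j a_jT_{-j}f$ is defined a.e., is locally bounded (hence in $L^1_{\loc}(\R^d)$), and satisfies $|h|\le\|f\|_{L^\infty}\sum_j|a_j|\chi_{B(j,r)}=\|f\|_{L^\infty}|g_{(a)}|$ a.e. Since $\|f\|_{L^\infty}g_{(a)}\in E$, solidness yields $h\in E$ with $\|h\|_E\le\|f\|_{L^\infty}\|g_{(a)}\|_E=\|f\|_{L^\infty}\|(a_j)_j\|_{E_r(\Z^d)}$, which is $(iii)$. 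Then $(ii)$ is immediate: each unit sequence $(\delta_{jk})_k$, $j\in\Z^d$, lies in $E_r(\Z^d)$ because $\chi_{B(j,r)}\in E$, and these are linearly independent, so $E_r(\Z^d)$ is infinite-dimensional.

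For $(iv)$ it suffices, by symmetry, to show $E_r(\Z^d)=E_R(\Z^d)$ with equivalent norms when $0<r\le R<1/2$. The inclusion $E_R(\Z^d)\subseteq E_r(\Z^d)$ with $\|\cdot\|_{E_r(\Z^d)}\le\|\cdot\|_{E_R(\Z^d)}$ follows at once from $\chi_{B(j,r)}\le\chi_{B(j,R)}$ and solidness; the reverse inclusion is the main obstacle. I would cover the compact set $\overline{B}(0,R)$ by finitely many open $r$-balls, $\overline{B}(0,R)\subseteq\bigcup_{k=1}^N B(z_k,r)$, so that $B(j,R)\subseteq\bigcup_{k=1}^N B(j+z_k,r)$ for every $j$. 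Writing $g^r_{(a)},g^R_{(a)}$ for the functions built from the $r$- and $R$-balls and using $|T_{-z_k}g^r_{(a)}|=\sum_j|a_j|\chi_{B(j+z_k,r)}$ (again disjointness), one obtains $|g^R_{(a)}|=\sum_j|a_j|\chi_{B(j,R)}\le\sum_{k=1}^N\sum_j|a_j|\chi_{B(j+z_k,r)}=\sum_{k=1}^N|T_{-z_k}g^r_{(a)}|$ a.e. Since $E$ is translation-invariant and closed under absolute values (solidness), each $|T_{-z_k}g^r_{(a)}|$ lies in $E$ with norm $\le\omega_E(z_k)\|g^r_{(a)}\|_E$, so the right-hand side is a finite sum of elements of $E$; one more application of solidness (to the locally bounded function $g^R_{(a)}$) gives $g^R_{(a)}\in E$ with $\|(a_j)_j\|_{E_R(\Z^d)}\le\big(\sum_{k=1}^N\omega_E(z_k)\big)\|(a_j)_j\|_{E_r(\Z^d)}$, the constant being finite because each $T_{-z_k}$ is a bounded operator on $E$. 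The only genuinely non-routine step is this covering-plus-translation argument for $(iv)$; everything in $(i)$--$(iii)$ is disjointness of the balls combined with the defining properties of a solid TIBD.
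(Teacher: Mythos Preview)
Your proof is correct and follows essentially the same approach as the paper: disjointness of the balls $\overline{B}(j,r)$ plus solidness for $(i)$--$(iii)$, and the same finite-covering-plus-translation argument for $(iv)$ (the paper assumes $R\le r$ rather than $r\le R$, but this is merely a symmetric relabelling). The only cosmetic difference is that the paper handles $(i)$ by a direct Cauchy-sequence argument rather than via your closed-subspace formulation, but the underlying idea is the same.
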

\begin{proof}
$(i)$ Let $(a_n)_{n \in \N} = ((a_{n,j})_{j})_{n \in \N}$ be a Cauchy sequence in $E_r(\Z^d)$. Set 
$$
e_n = \sum_{j \in \Z^d} a_{n,j} T_{-j}\chi_{B(0,r)} \in E, \qquad n \in \N,
$$
and notice that $(e_n)_{n \in \N}$ is a Cauchy sequence in $E$. Hence, there is $e \in E$ such that $e_n \rightarrow e$ as $n \to \infty$. On the other hand, as $E_r(\Z^d) \subset \C^{\Z^d}$ with continuous inclusion, there is $a = (a_j)_{j} \in \C^{\Z^d}$ such that $a_{n,j} \rightarrow a_j$ as $n \to \infty$ for all $j \in \Z^d$. Since
$$
e = \sum_{j \in \Z^d} a_{j} T_{-j}\chi_{B(0,r)} \mbox{ in } \mathcal{D}'(\R^d),
$$
we obtain that $a \in E_r(\Z^d)$. Finally, $a_n \rightarrow a$ in $E_r(\Z^d)$ as $n \to \infty$ because $e_n \rightarrow e$ in $E$ as $n \to \infty$.

\noindent$(ii)$ Since $\mathcal{D}(\R^d) \subset E$ and $E$ is solid, the characteristic function of every compact set in $\R^d$ belongs to $E$. Hence, all unit vectors belong to $E_r(\Z^d)$.

\noindent$(iii)$ This follows from the solidness of $E$.

\noindent$(iv)$ We may assume without loss of generality that $R  \leq r$. The continuous inclusion $E_r(\Z^d) \subseteq E_R(\Z^d)$ follows from $(iii)$ with $f = \chi_R$. For the converse, choose a finite set of points $\{x_1, \ldots, x_n\} \subset B(0,r)$ such that
$$
B(0,r) \subseteq \bigcup_{m = 1}^n B(x_m,R)
$$
and, thus,
$$
\chi_{B(0,r)} \leq \sum_{m=1}^n T_{-x_m} \chi_{B(0,R)}.
$$
Now let $(a_j)_j \in E_R(\Z^d)$ be arbitrary. Then,
$$
| \sum_{j \in \Z^d} a_j T_{-j} \chi_{B(0,r)} | \leq \sum_{j \in \Z^d} \sum_{m=1}^n |a_j| T_{-j-x_m} \chi_{B(0,R)} \leq  |\sum_{m=1}^n T_{-x_m}|\sum_{j \in \Z^d} a_j T_{-j} \chi_{B(0,R)} ||. 
$$
Since $E$ is solid and translation-invariant, we obtain that $(a_j)_j \in E_r(\Z^d)$ and that
$$
\| (a_j)_j \|_{E_r(\Z^d)} \leq  \| \sum_{m=1}^n T_{-x_m}|\sum_{j \in \Z^d} a_j T_{-j} \chi_{B(0,R)} | \|_E \leq  \sum_{m=1}^n \omega_E(x_m) \| (a_j)_j \|_{E_R(\Z^d)}.
$$
\end{proof}
Finally, we show that $E_r(\Z^d)$ is isomorphic to a subspace of $\mathcal{D}_E$. As explained above, taking Lemma \ref{montel-lemma}$(ii)$ into account,  this completes the proof of Theorem \ref{non-montel-solid}. 
\begin{proposition}
Let $E$ be  a solid TIBD. Fix $ 0 < r < 1/2$ and $\varphi \in \mathcal{D}(\R^d)$ with $\operatorname{supp} \varphi \subset B(0,r)$  such that $\varphi \equiv 1$ in a neighbourhood of $0$. Then,
$$
\iota: E_r(\Z^d) \rightarrow \mathcal{D}_E: (a_j)_j \rightarrow  \sum_{j \in \Z^d} a_j T_{-j}\varphi
$$
is a topological embedding.
\end{proposition}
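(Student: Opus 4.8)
The plan is to check the four ingredients of a topological embedding: $\iota$ is linear, well defined into $\mathcal D_E$, continuous, and bounded below (hence injective with continuous inverse onto its image). Linearity is clear, and the only non-routine point will be the lower bound. First I would fix $\rho$ with $0<\rho<r$ such that $\varphi\equiv 1$ on $\overline B(0,\rho)$, possible since $\varphi\equiv 1$ near $0$. Since $\operatorname{supp}(T_{-j}\varphi)\subseteq B(j,r)$ and $r<1/2$, the supports of the translates $T_{-j}\varphi$, $j\in\Z^d$, lie at mutual distance $\ge 1-2r>0$, so for every $(a_j)_j\in\C^{\Z^d}$ the series $\sum_j a_j T_{-j}\varphi$ is locally finite, defines a smooth function, and may be differentiated term by term: $\partial^\alpha\big(\sum_j a_j T_{-j}\varphi\big)=\sum_j a_j T_{-j}\partial^\alpha\varphi$. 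When $(a_j)_j\in E_r(\Z^d)$, Lemma~\ref{montel-lemma}$(iii)$ applied with $f=\partial^\alpha\varphi\in L^\infty(\R^d)$ (note $\operatorname{supp}\partial^\alpha\varphi\subseteq\overline B(0,r)$) shows $\sum_j a_j T_{-j}\partial^\alpha\varphi\in E$ with norm $\le\|\partial^\alpha\varphi\|_{L^\infty}\|(a_j)_j\|_{E_r(\Z^d)}$; hence $\iota((a_j)_j)\in\mathcal D_E$ and $\|\iota((a_j)_j)\|_{E,n}\le\big(\max_{|\alpha|\le n}\|\partial^\alpha\varphi\|_{L^\infty}\big)\|(a_j)_j\|_{E_r(\Z^d)}$ for every $n$, so $\iota$ is continuous.

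For the lower bound I would introduce the multiplier $\psi:=\sum_{k\in\Z^d}T_{-k}\chi_{B(0,\rho)}=\chi_{\bigcup_k B(k,\rho)}\in L^\infty(\R^d)$, which has $\|\psi\|_{L^\infty}=1$. The key pointwise identity is $\psi\cdot T_{-j}\varphi=T_{-j}\chi_{B(0,\rho)}$ for each $j$: on $B(j,r)\supseteq\operatorname{supp}(T_{-j}\varphi)$ one has $\psi=\chi_{B(j,\rho)}$, because the balls $B(k,\rho)$ with $k\ne j$ are disjoint from $B(j,r)$ (distance $\ge 1-r-\rho>0$), and $T_{-j}\varphi\equiv 1$ on $B(j,\rho)$; off $B(j,r)$ both sides vanish. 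Summing (again a locally finite sum) gives $\psi\cdot\iota((a_j)_j)=\sum_j a_j T_{-j}\chi_{B(0,\rho)}$. Since $E$ is solid and $|\psi g|\le|g|$ a.e., multiplication by $\psi$ maps $E$ into $E$ with operator norm $\le 1$, so
$$
\Big\|\sum_j a_j T_{-j}\chi_{B(0,\rho)}\Big\|_E\le\|\iota((a_j)_j)\|_E\le\|\iota((a_j)_j)\|_{E,0}.
$$
By Lemma~\ref{montel-lemma}$(iv)$ the norms $\|\cdot\|_{E_\rho(\Z^d)}$ and $\|\cdot\|_{E_r(\Z^d)}$ are equivalent, so there is $C>0$ with $\|(a_j)_j\|_{E_r(\Z^d)}\le C\|(a_j)_j\|_{E_\rho(\Z^d)}=C\big\|\sum_j a_j T_{-j}\chi_{B(0,\rho)}\big\|_E\le C\|\iota((a_j)_j)\|_{E,0}$.

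This last estimate forces $\iota$ to be injective and, combined with the continuity established above, shows that $\iota^{-1}\colon\iota(E_r(\Z^d))\to E_r(\Z^d)$ is continuous; hence $\iota$ is a topological embedding. Taking Lemma~\ref{montel-lemma}$(ii)$ into account, $\mathcal D_E$ then contains the infinite-dimensional Banach space $E_r(\Z^d)$ as a subspace, which completes the proof of Theorem~\ref{non-montel-solid}.

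I expect the only genuinely non-routine ingredient to be the choice of the multiplier $\psi$ and the identity $\psi\cdot T_{-j}\varphi=T_{-j}\chi_{B(0,\rho)}$; everything else reduces to disjointness of the integer-translated balls of radius $<1/2$, to Lemma~\ref{montel-lemma}, and to solidness of $E$ (which supplies both that multiplication by an $L^\infty$-function is a contraction on $E$ and that $E_r(\Z^d)$ is robust under changing the radius). A cruder alternative would be to bound $\chi_{B(0,\rho)}$ above by $|\varphi|$ plus an error term supported in the annulus $B(0,r)\setminus B(0,\rho)$ and control that error, but passing through $\psi$ is preferable because it yields an exact identity rather than an inequality.
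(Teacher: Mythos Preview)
Your proof is correct and follows essentially the same route as the paper: continuity via Lemma~\ref{montel-lemma}$(iii)$, a lower bound from solidness and the fact that $\varphi\equiv 1$ on a smaller ball, and Lemma~\ref{montel-lemma}$(iv)$ to pass between radii. The only difference is in how the lower bound is obtained: the paper observes directly that, by disjointness of the supports, $\bigl|\sum_j a_j T_{-j}\chi_{B(0,R)}\bigr|\le\bigl|\sum_j a_j T_{-j}\varphi\bigr|$ pointwise (no error term is needed, since $\chi_{B(0,R)}\le|\varphi|$), and then applies solidness; your multiplier $\psi$ packages the same inequality as an exact identity but is not strictly necessary.
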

\begin{proof}
The mapping $\iota$ is well-defined and continuous by Lemma \ref{montel-lemma}$(iii)$. We now show that $\iota$ is a strict morphism. Choose $0 < R < r$ such that $\varphi \equiv 1$ on $B(0,R)$. Then,
$$
| \sum_{j \in \Z^d} a_j T_{-j} \chi_{B(0,R)} | \leq  |\sum_{j \in \Z^d} a_j T_{-j}\varphi |, \qquad (a_j)_j \in E_r(\Z^d).
$$
Since $E$ is solid, we obtain that
$$
\| (a_j)_{j}\|_{E_R(\Z^d)} \leq \|\iota((a_j)_{j}) \|_E, \qquad (a_j)_j \in E_r(\Z^d).
$$
The result now follows from Lemma \ref{montel-lemma}$(iv)$.
\end{proof}

\end{document}